\definecolor{vegasgold}{rgb}{0.77, 0.7, 0.35}
\definecolor{darkgoldenrod}{rgb}{0.72, 0.53, 0.04}
\definecolor{gold(metallic)}{rgb}{0.83, 0.69, 0.22}
\newtheorem{lthm}{Theorem}
\DeclareFontFamily{U}{wncy}{}
\DeclareFontShape{U}{wncy}{m}{n}{<->wncyr10}{}
\DeclareSymbolFont{mcy}{U}{wncy}{m}{n}
\DeclareMathSymbol{\Sh}{\mathord}{mcy}{"58}
\tikzset{every loop/.style={min distance=10mm,looseness=10}}
\tikzstyle{vertex}=[auto=left,circle,minimum size=1pt,inner sep=0pt]
\newtheorem{theorem}{Theorem}[section]
\newtheorem{lemma}[theorem]{Lemma}
\newtheorem{ass}[theorem]{Assumption}
\newtheorem*{theorem*}{Theorem}
\newtheorem*{ass*}{Assumption}
\newtheorem{definition}[theorem]{Definition}
\newtheorem{corollary}[theorem]{Corollary}
\newtheorem{remark}[theorem]{Remark}
\newtheorem{proposition}[theorem]{Proposition}
\newcommand{\cG}{\mathcal{G}}
\newcommand{\Z}{\mathbb{Z}}
\newcommand{\Q}{\mathbb{Q}}
\newcommand{\cO}{\mathcal{O}}
\newcommand{\cS}{\mathcal{S}}
\newcommand{\cyc}{\mathrm{cyc}}
\newcommand{\op}[1]{\operatorname{#1}}
 \newcommand{\widebar}[1]{\mkern 2.5mu\overline{\mkern-2.5mu#1\mkern-2.5mu}\mkern 2.5mu}
\numberwithin{equation}{section}
\begin{document}

\title[Iwasawa theory of Cayley graphs]{On the Iwasawa theory of Cayley graphs}

\author[S.~Ghosh]{Sohan Ghosh}
\address[Ghosh]{Harish Chandra Research Institute, A CI of Homi Bhabha National Institute,  Chhatnag Road, Jhunsi, Prayagraj (Allahabad) 211 019 India}
\email{ghoshsohan4@gmail.com}

\author[A.~Ray]{Anwesh Ray}
\address[Ray]{Chennai Mathematical Institute, H1, SIPCOT IT Park, Kelambakkam, Siruseri, Tamil Nadu 603103, India}
\email{anwesh@cmi.ac.in}

\keywords{Cayley graphs, Iwasawa polynomials, connections between representation theory and graph theory, $\mathbb{Z}_\ell$-towers}
\subjclass[2020]{Primary: 05C25, 11R23, Secondary: 05C31, 05C50}

\maketitle

\begin{abstract}
This paper explores Iwasawa theory from a graph theoretic perspective, focusing on the algebraic and combinatorial properties of Cayley graphs. Using representation theory, we analyze Iwasawa-theoretic invariants within $\mathbb{Z}_\ell$-towers of Cayley graphs, revealing connections between graph theory, number theory, and group theory. Key results include the factorization of associated Iwasawa polynomials and the decomposition of $\mu$- and $\lambda$-invariants. Additionally, we apply these insights to complete graphs, establishing conditions under which these invariants vanish.
\end{abstract}

\section{Introduction}
\subsection{Background and motivation}
\par Fix a prime number $\ell$ throughout. Let's consider a number field $K$, and let $\mathbb{Z}_\ell$ denote the ring of $\ell$-adic integers. An infinite abelian extension $K_\infty/K$ is said to be a $\Z_\ell$-extension if $\op{Gal}(K_{\infty}/K)$ is isomorphic to $\Z_\ell$ as a topological group. For each integer $n\geq 0$, set $K_n/K$ to be the extension contained in $K_\infty$ of degree $\ell^n$. Classical Iwasawa theory studies $\Z_\ell$-extensions of number fields and the asymptotic behavior of certain arithmetic invariants for the fields $K_n$. In the late 1950s, Iwasawa \cite{IwasawaMain} investigated the growth of class groups over these $\Z_\ell$-extensions of $K$, which laid the foundation for Iwasawa theory. Denote by $\op{Cl}(K_n)$ the class group of $K_n$ and $h_{K_n}:=\# \op{Cl}(K_n)$ its class number.

\begin{theorem*}[Iwasawa]
Let $K$ be a number field, $K_\infty$ be a $\Z_\ell$-extension of $K$ and for $n\in \Z_{\geq 0}$ denote by $\ell^{e_n}$ the exact power of $\ell$ that divides $h_{K_n}$. Then there exist invariants $\lambda,\mu\in \Z_{\geq 0}$ and $\nu\in \Z$,  depending on $\ell$ and independent of $n$, such that $e_n=\lambda n+\mu \ell^n+\nu$ for $n\gg 0$.  

\end{theorem*}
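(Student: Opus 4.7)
The plan is to translate the problem into the language of Iwasawa modules and invoke the structure theorem for finitely generated modules over the Iwasawa algebra $\Lambda := \Z_\ell[\![T]\!]$. First I would set $\Gamma := \op{Gal}(K_\infty/K) \cong \Z_\ell$, fix a topological generator $\gamma$, and identify the completed group algebra $\Z_\ell[\![\Gamma]\!]$ with $\Lambda$ via $\gamma \mapsto 1+T$. For each $n$, let $L_n$ denote the maximal unramified abelian $\ell$-extension of $K_n$ and set $X_n := \op{Gal}(L_n/K_n)$, which by class field theory is canonically the $\ell$-Sylow subgroup of $\op{Cl}(K_n)$; hence $|X_n| = \ell^{e_n}$. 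Taking $L_\infty := \bigcup_n L_n$ gives the module $X := \op{Gal}(L_\infty/K_\infty) = \varprojlim_n X_n$, on which $\Gamma$ acts by conjugation through any lift, giving $X$ the structure of a $\Lambda$-module.

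Next I would show that $X$ is a finitely generated torsion $\Lambda$-module. Finite generation follows from the topological Nakayama lemma once one knows that $X/(\ell, T)X$ is finite; this is deduced from the fact that only finitely many primes of $K$ ramify in $K_\infty/K$ and that $X/(\gamma - 1)X$ surjects onto $X_0$ up to a bounded error coming from the decomposition groups of the ramified primes. Torsion-ness is obtained from the observation that a free summand $\Lambda^r$ with $r \geq 1$ would force $|X/\omega_n X|$ to grow faster than any exponential in $\ell^n$, contradicting the finiteness of each $X_n$. With this in hand, the structure theorem yields a pseudo-isomorphism
\[ X \sim \bigoplus_{i=1}^s \Lambda/(\ell^{m_i}) \oplus \bigoplus_{j=1}^t \Lambda/(f_j(T)^{n_j}), \]
where each $f_j$ is a distinguished irreducible polynomial. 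Define $\mu := \sum_i m_i$ and $\lambda := \sum_j n_j \deg f_j$.

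The heart of the argument is to compare $X_n$ with the quotient $X/\omega_n X$, where $\omega_n := (1+T)^{\ell^n} - 1$. Using a theorem of Iwasawa, one identifies $X_n$ with a quotient of $X/\omega_n X$ by a submodule $Y_n$ generated by inertia elements attached to the primes ramifying in $K_\infty/K$; for $n$ large enough this submodule stabilises, so $|X_n|$ and $|X/\omega_n X|$ differ by a bounded factor independent of $n$. One then computes the elementary quotients directly: $|\Lambda/(\ell^{m_i}, \omega_n)| = \ell^{m_i \ell^n}$, and a Weierstrass-preparation argument applied to $\omega_n$ modulo $f_j^{n_j}$ shows that $|\Lambda/(f_j(T)^{n_j}, \omega_n)| = \ell^{n_j (\deg f_j)\, n + O(1)}$ for $n \gg 0$. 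Summing over the elementary factors gives
\[ \log_\ell |X/\omega_n X| = \mu \ell^n + \lambda n + \nu' \]
for some constant $\nu'$ and all $n \gg 0$, and combining with the bounded discrepancy from $Y_n$ and from the pseudo-isomorphism yields the claimed formula $e_n = \mu \ell^n + \lambda n + \nu$.

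The main obstacle is the finitary control in the last step: one must carefully track the pseudo-null kernel and cokernel of the structure-theorem pseudo-isomorphism, together with the inertia submodules $Y_n$, to see that their combined contribution is bounded in $n$. This is precisely why the formula holds only for $n \gg 0$, and why $\nu$ must be allowed to be a (possibly negative) integer absorbing the cumulative error terms, whereas $\mu$ and $\lambda$ are non-negative, being read off from the structure theorem.
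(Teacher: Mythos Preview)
The paper does not actually prove this theorem. It is stated in the introduction purely as historical background and motivation, attributed to Iwasawa with the citation \cite{IwasawaMain}; the paper's own results concern the graph-theoretic analogue (Theorem~\ref{GonetVallieresthm} and the results of Section~\ref{s 4}), not the number-field statement. So there is no ``paper's own proof'' to compare your proposal against.

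That said, your outline is the standard argument and is essentially correct in spirit. A couple of points deserve more care if you were to write it out in full. First, the comparison of $X_n$ with a $\Lambda$-quotient of $X$ is not quite via $X/\omega_n X$: one should fix $n_0$ large enough that all ramified primes are totally ramified in $K_\infty/K_{n_0}$, and then show $X_n \cong X/\nu_{n,n_0} Y_0$ where $\nu_{n,n_0} = \omega_n/\omega_{n_0}$ and $Y_0$ is a fixed submodule of $X$ of finite index determined by the inertia generators at the ramified primes. Second, the torsion-ness argument as you phrase it is circular (you invoke finiteness of $X_n$, but that is what one is trying to control); the usual route is to bound $X/\omega_{n_0}X$ directly for a single $n_0$ and deduce torsion from that together with finite generation. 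These are refinements rather than genuine gaps, and the overall strategy you describe is exactly Iwasawa's.
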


A natural example of a $\Z_\ell$-extension of $K$ is the cyclotomic $\Z_\ell$-extension, denoted by $K_\cyc$. For $K_\infty=K_\cyc$,  we simply denote the Iwasawa invariants by $\mu_\ell(K)${\color{blue},} $\lambda_\ell(K)${\color{blue},} and $\nu_\ell(K)$. Iwasawa famously conjectured that   $\mu_\ell(K)=0$ for all number fields $K$. This conjecture has been proved for abelian number fields by Ferrero and Washington \cite{FerreroWash}. In case of general number fields, this is still an open problem.

\par The Iwasawa theory of graphs was introduced by Vallières \cite{Vallieres:2021} and Gonet \cite{Gonet:2021a, Gonet:2022}. They explored $\Z_\ell$-towers of \emph{multigraphs} and demonstrated parallels to Iwasawa's theorem regarding the asymptotic variation of graph complexities along these towers, cf. Theorem \ref{GonetVallieresthm} for further details. The $\Z_\ell$-towers of multigraphs exhibit associated $\mu$, $\lambda$, and $\nu$-invariants, as well as a graph-theoretic interpretation of the \emph{Iwasawa polynomial}. Interest in Iwasawa theory of graphs has recently gained momentum, cf. for instance, the following works \cite{mcgownvallieresII, mcgownvallieresIII, DuBose/Vallieres:2022, Kleine/Muller:2022,Ray/Vallieres:2022, DLRV, LeiMuller}.

\par In this article, we take a closer look at Cayley graphs associated to finite abelian groups. There exists a captivating relationship between the algebraic properties of the Artin-Ihara L-functions associated with these graphs and the representation theory of their underlying groups. We analyze Iwasawa-theoretic invariants associated to Cayley graphs and show that there is an analogous relationship with representation theory. This leads to intriguing connections between graph theory, number theory, and group theory.

\subsection{Main results}
\par Let us describe our main results in greater detail. Let $G$ be a finite abelian group $G$ and $S\subseteq G$ be a subset such that
\begin{itemize}
    \item $S$ generates $G$,
    \item $S=S^{-1}$,
    \item $1\notin S$.
\end{itemize}
Set $\widehat{G}$ to denote the group of characters $\op{Hom}\left(G, \mathbb{C}^\times\right)$. The Cayley graph $\mathrm{Cay}(G, S)$ of the group $G$ with respect to the generating set $S$ is a graph where:
   \begin{itemize}
       \item The vertex set $V$ consists of elements of $G$, i.e., $V = \{v_g \mid g \in G\}$.
       \item There is an edge from vertex $g_1$ to vertex $g_2$ (denoted as $e(g_1, g_2)$) if and only if $g_1 g_2^{-1} \in S$.
   \end{itemize}
The eigenvalues of the adjacency matrix of a Cayley graph are closely related to the irreducible representations of the group $G$. We mention here that these Cayley graphs are related to a sightly different construction, namely Cayley--Serre graphs. These graphs are obtained as voltage assignments on bouquet graphs. For further details, we refer to \cite[p.445]{Vallieres:2021}. When $G$ is abelian, we show that Iwasawa polynomials associated to a Cayley graph can be factored in a natural way, where each character $\psi\in \widehat{G}$ gives rise to a factor. Let $\beta: S\rightarrow \Z_\ell$ be a function such that the following conditions are satisfied
\begin{enumerate}
\item the image of $\beta$ generates $\Z_{\ell}$, 
\item $\beta(s^{-1})=-\beta(s)$ and $\beta(1_G)=0$,
\item the image of $\beta$ lies in $\Z$,
\item there is a tuple $(h_1, \dots, h_m)\in S^m$ such that $h_1 h_2 \dots h_m\in S$ and
\begin{equation}
\beta(h_1 h_2 \dots h_m)\not \equiv \sum_{i=1}^m \beta(h_i)\pmod{\ell}.
\end{equation}
\end{enumerate}
Then associated to such a function is a $\Z_\ell$-tower of connected graphs
\begin{equation}\label{Z_l tower}X  \leftarrow X_1 \leftarrow X_2 \leftarrow \ldots \leftarrow X_k \leftarrow \ldots\end{equation}over $X=\rm{Cay}(G,S)$, this is made precise in subsection \ref{section 3.1}. Let $f_X(T)$ be the associated Iwasawa polynomial (see Definition \ref{definition of the Iwasawa polynomial}).

\begin{lthm}[Theorem \ref{determinant calculation}]
    There are explicit polynomials $P_\psi(T)$ (see \eqref{def of Pchi}) such that up to multiplication by a unit in $\Z_\ell\llbracket T\rrbracket$, the Iwasawa polynomial $f_X(T)$ is equal to the product $\prod_{\psi\in \widehat{G}} P_\psi(T)$.
\end{lthm}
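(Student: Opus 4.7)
The plan is to exploit the vertex-transitivity of $X = \op{Cay}(G,S)$ under the left-regular action of $G$: the matrix $M(T)$ that computes the Iwasawa polynomial is $G$-equivariant, and the abelian hypothesis on $G$ forces it to diagonalize over characters of $G$. From the voltage construction of subsection~\ref{section 3.1} applied to $\beta$, one obtains a ``twisted Laplacian'' $M(T) \in M_{|G|}(\Z_\ell\lb T\rb)$ indexed by $G \times G$, whose $(g_1,g_2)$-entry is $|S|\,\delta_{g_1,g_2} - (1+T)^{\beta(s)}$ when $s = g_1 g_2^{-1} \in S$ and zero otherwise. By the definition of the Iwasawa polynomial (Definition~\ref{definition of the Iwasawa polynomial}), $f_X(T)$ agrees with $\det M(T)$ up to a unit in $\Z_\ell\lb T\rb$, modulo the standard normalization that accounts for the all-ones kernel of an unvoltaged Laplacian.

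The key step is to recognize $M(T)$ as the matrix, in the basis $\{v_g\}_{g\in G}$, of left multiplication by
\[
\mu(T) \;=\; |S| \;-\; \sum_{s \in S} s \cdot (1+T)^{\beta(s)} \;\in\; \Z_\ell[G]\lb T\rb
\]
acting on the regular representation of $G$. After base change to a finite unramified extension $\cO/\Z_\ell$ containing the $|G|$-th roots of unity and inverting $|G|$, the group algebra splits as $\bigoplus_{\psi\in\widehat{G}} \cO[|G|^{-1}] \cdot e_\psi$ via Fourier inversion, with idempotents $e_\psi = |G|^{-1}\sum_g \psi(g^{-1})g$. On the $\psi$-component, $\mu(T)$ acts as the scalar
\[
P_\psi(T) \;:=\; |S| \;-\; \sum_{s \in S} \psi(s)\,(1+T)^{\beta(s)},
\]
so that $\det M(T) = \prod_{\psi\in\widehat{G}} P_\psi(T)$ inside $\Frac(\cO)\lb T\rb$. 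Both sides are invariant under $\op{Gal}(\cO/\Z_\ell)$ and visibly lie in $\Z_\ell\lb T\rb$, so the identity descends. Comparing with $f_X(T)$ then yields the claimed factorization.

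The main obstacle is the integrality of the decomposition when $\ell \mid |G|$: the Fourier idempotents have $|G|$ in the denominator, so the splitting is literally valid only after inverting $|G|$. I would handle this by carrying out the diagonalization argument over $\Frac(\cO)$ and then invoking the fact that both $\det M(T)$ and $\prod_{\psi} P_\psi(T)$ are explicit polynomial expressions in $\Z_\ell\lb T\rb$, so the identity of power series must already hold integrally. A secondary bookkeeping point is the normalization: $P_{\psi_0}(T)$ for the trivial character vanishes at $T=0$, mirroring the all-ones kernel of the ordinary Laplacian, and condition (4) on $\beta$ is precisely what ensures that once this vanishing is absorbed into the definition of $f_X(T)$, the residual discrepancy is a unit in $\Z_\ell\lb T\rb$ rather than a non-unit divisible by $T$ or $\ell$.
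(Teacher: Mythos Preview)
Your approach is essentially the same as the paper's: both diagonalize $M(1+T)$ over the characters of $G$ and read off the determinant as the product of eigenvalues. The paper phrases this concretely by exhibiting $v_\psi = (\psi(g_1),\dots,\psi(g_n))$ as eigenvectors and conjugating by the character table matrix $\mathbf{F}$, while you phrase the identical computation as ``$M(T)$ is left multiplication by $\mu(T)\in\Z_\ell[G]\llbracket T\rrbracket$ in the regular representation, which splits over the Fourier idempotents.'' These are the same argument in different clothing.

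Two small corrections to your write-up. First, the extension $\cO/\Z_\ell$ obtained by adjoining $|G|$-th roots of unity is \emph{not} unramified when $\ell\mid |G|$; fortunately this is irrelevant, since you only need to work over $\op{Frac}(\cO)$ to diagonalize and then observe that the resulting determinant identity lies in $\Z_\ell\llbracket T\rrbracket$. Second, your final paragraph misidentifies the role of condition~(4): that condition guarantees connectedness of the tower (via Proposition~\ref{reformation propn} and Theorem~\ref{rho is surjective}) and plays no part in the factorization itself. In the paper, $f_{X,\alpha}(T)$ is \emph{defined} as $\det M(1+T)$, so there is no normalization to absorb; the only unit appearing is $(1+T)^{-m_\beta |G|}$, which arises when clearing denominators to pass from $r-\sum_s (1+T)^{-\beta(s)}\psi(s)$ to the polynomial $P_\psi(T)$ of \eqref{def of Pchi}, and this is a unit unconditionally.
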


Moreover, the $\mu$- and $\lambda$-invariants associated to the $\Z_\ell$-tower \eqref{Z_l tower} decompose into a sum of $\mu$- and $\lambda$-invariants associated to $P_\psi(T)$. In greater detail, $P_\psi(T)$ is a polynomial with coefficients in a valuation ring $\cO$ with uniformizer $\varpi$. Write $P_\psi(T)= \varpi^{\mu_\psi} Q_\psi(T) u_\psi(T)$, where $Q_\psi(T)$ is a distinguished polynomial and $u_\psi(T)$ is a unit in $\Z_\ell\llbracket T\rrbracket$. Set $\lambda_\psi$ to denote the degree of $Q_\psi(T)$. Set $e\in \Z_{\geq 1}$ to denote the \emph{ramification index}, defined by the relationship $(\ell)=(\varpi^e)$.

\begin{lthm}[Theorem \ref{sum of Iwasawa invariants theorem}]
    Let $\kappa_\ell(X_n)$ denote the $\ell$-primary part of the complexity of $X_n$. Then, for large enough values of $n$, we have that $\kappa_\ell(X_n)=\ell^{e_n}$, where 
    \[e_n=\mu \ell^n +n \lambda+ \nu,\] where 
    \[\mu=\frac{1}{e}\sum_\psi \mu_\psi\text{ and }\lambda=\sum_\psi \lambda_\psi-1,\] for the ramification constant $e\in \Z_{\geq 0}$ defined above.
\end{lthm}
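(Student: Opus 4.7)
The plan is to combine the factorization from Theorem A with the Weierstrass preparation theorem and the Gonet--Vallières asymptotic formula (Theorem \ref{GonetVallieresthm}). The overall strategy is to extract the Iwasawa $\mu$- and $\lambda$-invariants of $f_X(T)$ from the character-indexed factors $P_\psi(T)$, and then feed them into the standard asymptotic growth formula for $\kappa_\ell(X_n)$.

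First, I would invoke Theorem A to write $f_X(T) = u(T) \cdot \prod_{\psi \in \widehat{G}} P_\psi(T)$ for some unit $u(T) \in \Z_\ell\llbracket T\rrbracket$, bearing in mind that each $P_\psi(T)$ a priori lives in the larger coefficient ring $\cO\llbracket T\rrbracket$. Applying Weierstrass preparation in $\cO\llbracket T\rrbracket$ to each $P_\psi$, I would write $P_\psi(T) = \varpi^{\mu_\psi}\,Q_\psi(T)\,u_\psi(T)$ with $Q_\psi$ distinguished of degree $\lambda_\psi$ and $u_\psi$ a unit. Taking the product,
\[ f_X(T) \;=\; \Bigl(u(T)\prod_\psi u_\psi(T)\Bigr)\,\varpi^{\sum_\psi \mu_\psi}\, \prod_\psi Q_\psi(T), \]
and since a product of distinguished polynomials is again distinguished, $\prod_\psi Q_\psi(T)$ is distinguished of degree $\sum_\psi \lambda_\psi$.

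To transfer these $\cO$-level invariants to the Iwasawa invariants of $f_X$ as an element of $\Z_\ell\llbracket T\rrbracket$, I would use the relation $(\ell) = (\varpi^e)$. Since $f_X(T)$ has coefficients in $\Z_\ell$, the $\varpi$-valuation of the $\varpi$-power appearing in its Weierstrass factorization must be an integer multiple of $e$, forcing $e \mid \sum_\psi \mu_\psi$ and giving $\mu_\ell(f_X) = \frac{1}{e}\sum_\psi \mu_\psi$. The $\lambda$-invariant of $f_X$ is then the degree of its distinguished part, which equals $\sum_\psi \lambda_\psi$ up to a correction coming from the trivial character, and this is where the $-1$ enters.

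The shift $\lambda = \sum_\psi \lambda_\psi - 1$ traces back to the trivial character $\psi_0$: the Laplacian of $\mathrm{Cay}(G,S)$ has a zero eigenvalue on the constant function, so $P_{\psi_0}(T)$ carries a superfluous factor of $T$ that does not appear in $f_X(T)$, since the complexity (via Kirchhoff's matrix--tree theorem) is computed from the \emph{reduced} Laplacian rather than the full one. Dividing out this factor of $T$ lowers the $\lambda$-invariant by exactly one. With the invariants $\mu_\ell(f_X)$ and $\lambda_\ell(f_X)$ identified, the Gonet--Vallières theorem immediately yields $\kappa_\ell(X_n) = \ell^{e_n}$ with $e_n = \mu\ell^n + \lambda n + \nu$ for $n \gg 0$. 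The main obstacle is the delicate bookkeeping around the trivial character and the ramification index --- in particular, making precise the excised factor of $T$ coming from $P_{\psi_0}(T)$ and verifying that the $\varpi$-adic divisibility by $e$ holds cleanly without hidden adjustments from the unit $u(T)$ produced by Theorem A.
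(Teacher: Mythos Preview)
Your overall approach matches the paper's: factor via Theorem~A, apply Weierstrass preparation to each $P_\psi$ over $\cO$, read off $\mu$ and $\lambda$ using $(\ell)=(\varpi^e)$, and then appeal to the Gonet--Valli\`eres theorem. That is exactly what the paper does.

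However, your explanation of the $-1$ shift contains an error. You write that $P_{\psi_0}(T)$ ``carries a superfluous factor of $T$ that does not appear in $f_X(T)$''. This contradicts Theorem~A itself: since $f_{X,\alpha}(T)\sim\prod_\psi P_\psi(T)$ as elements of $\cO\llbracket T\rrbracket$, any factor of $T$ in the product is also a factor of $f_{X,\alpha}(T)$. Indeed the paper proves directly (Lemma preceding the definition of $g_{X,\alpha}$) that $T\mid f_{X,\alpha}(T)$, because $M(1)$ is the Laplacian $Q_X$, which is singular. The $-1$ does not come from a discrepancy between $f_X$ and $\prod_\psi P_\psi$; it comes from the \emph{definition} of $\mu_\ell(X,\alpha)$ and $\lambda_\ell(X,\alpha)$. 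In the paper these invariants are attached not to $f_{X,\alpha}(T)$ but to $g_{X,\alpha}(T)$, where $f_{X,\alpha}(T)=(1+T)^{-m}\,T\,g_{X,\alpha}(T)$. Thus the relation to use is
\[
T\,g_{X,\alpha}(T)\;\sim\;\prod_{\psi}P_\psi(T)\;\sim\;\varpi^{\sum_\psi\mu_\psi}\prod_\psi Q_\psi(T),
\]
and comparing $\lambda$-invariants of both sides gives $1+\lambda_\ell(X,\alpha)=\sum_\psi\lambda_\psi$. Your appeal to the reduced Laplacian and Kirchhoff's theorem is morally in the right direction but is not the mechanism operating here; the bookkeeping is purely at the level of how $\lambda_\ell(X,\alpha)$ is defined.
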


Next, we show that the Iwasawa invariants associated to the factor polynomials $P_\psi(T)$ can be suitably calculated. Under some very explicit combinatorial criteria, it is shown that $\mu_\psi$ and $\lambda_\psi$ vanish. More specifically, we refer to Lemma \ref{basic lemma on mu and lambda vanishing}, Lemma \ref{trivial chi case lemma} and Proposition \ref{mu vanishing prop}.

\par Finally, our findings on Cayley graphs are utilized to examine the Iwasawa theory of complete graphs. Consider a positive integer $n$ and let $K_n$ represent the complete graph with no self-loops on $n$ vertices. Denote by $C_n$ the cyclic group of order $n$, and define $C_n' = C_n\setminus \{1\}$. The complete graph $K_n$ is then the Cayley graph associated to the pair $(C_n, C_n')$. We factor the Iwasawa polynomial, and obtain the following result on the vanishing of the $\mu$-invariant. 

\begin{lthm}[Theorem \ref{last thm}]
    For $\beta: C_n'\rightarrow \Z_\ell$, and $n\in \Z_{\geq 1}$ let $\mu_n$ (resp. $\lambda_n$) be the $\mu$-invariant (resp. $\lambda$-invariant) associated to the tower over $K_n$. Suppose that $\ell\nmid n$ and $\sum_{s\in C_n'} \bar{\beta}(s)^2\neq 0$. Then, $\mu_n=0$ and $\lambda_n=1$.
\end{lthm}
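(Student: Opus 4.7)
The strategy is to apply the factorization $f_{K_n}(T) \sim \prod_{\psi \in \widehat{C_n}} P_\psi(T)$ from Theorem \ref{determinant calculation} and then assemble the invariants via the additivity in Theorem \ref{sum of Iwasawa invariants theorem}, so that the problem reduces to computing $\mu_\psi$ and $\lambda_\psi$ for each $\psi \in \widehat{C_n}$. I expect the $n-1$ non-trivial characters each to contribute $\mu_\psi = \lambda_\psi = 0$, while the trivial character $\mathbf{1}$ will contribute $\mu_{\mathbf{1}} = 0$ and $\lambda_{\mathbf{1}} = 2$, giving the claimed $\mu_n = 0$ and $\lambda_n = 2 - 1 = 1$.

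For a non-trivial $\psi \in \widehat{C_n}$, I would evaluate $P_\psi(T)$ at $T = 0$. The orthogonality relation $\sum_{g \in C_n} \psi(g) = 0$ together with $\psi(1) = 1$ forces $\sum_{s \in C_n'} \psi(s) = -1$, so combined with $|C_n'| = n-1$ the constant term of $P_\psi(T)$ (of the form $|S| - \sum_s \psi(s)(1+T)^{\beta(s)}$) equals $n$. Since $\ell \nmid n$, this constant term is a unit in $\Z_\ell$, hence $P_\psi(T) \in \Z_\ell\llbracket T\rrbracket^\times$, and the basic vanishing criterion (Lemma \ref{basic lemma on mu and lambda vanishing}) yields $\mu_\psi = \lambda_\psi = 0$.

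For $\psi = \mathbf{1}$ the analysis is more delicate. I would Taylor-expand $(1+T)^{\beta(s)} = \sum_{k \geq 0} \binom{\beta(s)}{k} T^k$. The constant term of $P_{\mathbf{1}}(T)$ vanishes because $K_n$ is $(n-1)$-regular; the linear coefficient is $-\sum_{s \in C_n'} \beta(s) = 0$ by the antisymmetry $\beta(s^{-1}) = -\beta(s)$; and the quadratic coefficient $-\sum_{s} \binom{\beta(s)}{2}$ simplifies, via the pairing $s \leftrightarrow s^{-1}$ (with $\beta(s) = 0$ whenever $s = s^{-1}$), to $-\tfrac{1}{2}\sum_{s \in C_n'} \beta(s)^2$, which is in fact an honest integer. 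The hypothesis $\sum_s \bar\beta(s)^2 \neq 0$ in $\F_\ell$ then makes this coefficient a unit modulo $\ell$, so $P_{\mathbf{1}}(T) = T^2 \cdot u(T)$ for some $u(T) \in \Z_\ell\llbracket T\rrbracket^\times$. This gives $\mu_{\mathbf{1}} = 0$ and $\lambda_{\mathbf{1}} = 2$.

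Summing via Theorem \ref{sum of Iwasawa invariants theorem} yields $\mu_n = \tfrac{1}{e}\sum_\psi \mu_\psi = 0$ and $\lambda_n = \sum_\psi \lambda_\psi - 1 = 2 - 1 = 1$, as required. The main obstacle is the trivial-character step: one must verify carefully that the apparent denominator $2$ in the quadratic coefficient is genuinely absorbed by the $s \leftrightarrow s^{-1}$ pairing (which implicitly constrains $\ell$ to be odd, since $\sum_s \beta(s)^2$ is always even) and that the resulting integer is exactly what the hypothesis on $\bar\beta$ controls modulo $\ell$; once this identification is made, the rest of the argument is a routine bookkeeping exercise.
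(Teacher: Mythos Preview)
Your proposal is correct and mirrors the paper's proof: Lemma~\ref{mu n chi and lambda n chi are 0} (which is exactly your constant-term computation via Lemma~\ref{basic lemma on mu and lambda vanishing}) handles the nontrivial characters, Lemma~\ref{trivial chi case lemma} handles the trivial character, and Theorem~\ref{sum of Iwasawa invariants theorem} assembles the pieces. The only cosmetic difference is that you extract the $T^2$-coefficient $-\tfrac{1}{2}\sum_s \beta(s)^2$ directly whereas the paper works with $P_{\mathbf{1}}''(0) = \sum_s \beta(s)^2$; your remark that the hypothesis forces $\ell$ odd is correct and is left tacit in the paper.
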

In our final section, we illustrate our results through two concrete examples. In fact, we are able to draw the graphs in towers, which makes them come to life, cf. \eqref{picture} and \eqref{picture2}. We note that an undirected Cayley graph $X=\op{Cay}(G, S)$ can be interpreted as a Galois cover of a bouquet. The formula $f_X(T)=\prod_{\psi\in \widehat{G}} P_\psi(T)$ in Theorem \ref{determinant calculation} has parallels with \cite[(4.5) on p. 20]{Ray/Vallieres:2022}. The latter formula, due to the second author and Daniel Vallieres, emerges in the context of proving an analogue of Kida's formula for cyclic groups within the graph-theoretic framework. This coincidence indicates that broader generalizations of our work are worth exploring. Such generalizations may encompass more general types of graphs, higher-dimensional analogues, or new classes of groups $G$.

\subsection{Organization}
The article is structured into five sections. Section \ref{s 2} covers foundational concepts and establishes relevant notation, including the formal definition of a \emph{multigraph}, discussion on the Galois theory of graph covers, and introduction of Artin-Ihara L-functions associated with graphs. Section \ref{s 3} focuses on establishing the Iwasawa theory of graphs, which involves a specific combinatorial framework for parameterizing connected $\Z_\ell$-towers over a graph. This section details the Iwasawa polynomial and its connection to the asymptotic complexity growth of graphs within $\Z_\ell$-towers, encapsulated by the Iwasawa $\mu$- and $\lambda$-invariants. In section \ref{s 4}, the main results of the article are proven. These results are illustrated through an example presented in section \ref{s 5}.

\subsection*{Data availability} The manuscript has no associated data.

\subsection*{Conflict of interest} There is no conflict of interest that the authors wish to report.

\subsection*{Acknowledgement} We would like to thank the referees for the careful review of the manuscript and the comments and corrections.

\section{Preliminaries}\label{s 2}
\par In this section, we recall some preliminary notions and set up relevant notation. The notation we use is consistent with \cite{Vallieres:2021, mcgownvallieresII, mcgownvallieresIII, Ray/Vallieres:2022, DLRV}.  In order to be consistent with previous work, we discuss generalities for multigraphs. The Cayley graphs we consider in this article are undirected graphs, with no self loops.

\subsection{Galois theory of covers}

\par We fix a prime number $\ell$ and set $\Z_\ell$ to denote the $\ell$-adic integers. Let $X$ be a finite multigraph; recall that this means that $X$ can be described as a quadruple $(V_X, E_X^+, i, \iota)$, where 
\begin{itemize}
    \item $V_X=\{v_1, \dots, v_{g_X}\}$ is a finite set of \emph{vertices},
    \item $E_X^+$ is a collection of \emph{edges} between vertices.
    \item  The set of edges is equipped with an \emph{incidence function} \[i: E_X^+\rightarrow V_X\times V_X.\] Here, the interpretation is that an edge $e$ starts at $v_i$ and ends at $v_j$ if $i(e)=(v_i, v_j)$,
    \item $\iota: E_X^+\rightarrow E_X^+$ is the \emph{inversion map}.
\end{itemize}
In addition, it is required that the following compatibility relations hold
\begin{enumerate}
    \item $\iota^2$ is the identity on $E_X^+$, 
    \item $\iota(e)\neq e$ for all $e\in E_X^+$,
    \item $i(\iota(e))=\tau(i(e))$,
\end{enumerate}
where $\tau: V_X\times V_X\rightarrow V_X\times V_X$ is the map defined by $\tau(e_1, e_2):=(e_2, e_1)$.
We write $e\sim e'$ if $e'=\iota(e)$, and denote by $E_X$ the set of equivalence classes for this relation. We think of $E_X^+$ as edges with orientation (or directed edges) and $E_X$ as undirected edges. The map $\pi: E_X^+\rightarrow E_X$ is the map that assigns to $e$ its equivalence class. When using the word "edge" we shall mean an element in $E_X^+$. We shall henceforth simply use the term "\emph{graph}" to refer to a multigraph. An edge from $v_i$ to $v_i$ is referred to as a \emph{loop}. We also define the incidence matrix $A_X=(a_{i,j})$ of the graph $X$, where $a_{i,j}$ is the number of edges from $v_i$ to $v_j$. We define source and target maps $o, t: E_X^+\rightarrow V_X$ to be the compositions of $i$ with the projections to the first and second factor of $V_X\times V_X$ respectively. Observe that the source map $o$ assigns to each directed edge its starting vertex, while the target map $t$ assigns to each edge its ending vertex. In particular, $e$ is a loop precisely if $o(e)=t(e)$. Note that $(V_X, E_X)$ is an undirected graph. For $v\in V_X$, let $E_{X,v}^+:=\{e\in E_X^+\mid o(e)=v\}$, i.e., the set of directed edges emanating from $v$. The \emph{degree} of $v$ is defined as the number of edges emanating from $v$, i.e., $\op{deg}(v):=\# E_{X,v}^+$. The betti numbers of $X$ are defined as follows 
\[b_i(X):=\op{rank}_{\Z} H_i(X,\Z).\] The \emph{Euler characteristic} is defined as follows $\chi(X):=b_0(X)-b_1(X)$. When $X$ is connected, $b_0(X)=1$ and $b_1(X)=\# E_X-\# V_X+1$. We have that $\chi(X)=\# V_X-\# E_X$.
 \begin{ass}\label{no vertices with val 1}
     It will be assumed throughout that all our multigraphs are connected with no vertices having degree equal to $1$. Moreover, we assume that $\chi(X)\neq 0$, i.e., the graph is not a cycle graph.
 \end{ass}
\par The divisor group $\op{Div}(X)$ consists of formal sums of the form $D=\sum_{v\in V_X} n_v v$, where $n_v\in \Z$ for all $v$. It is the free abelian group on the vertices $V_X$ of $X$. The degree of $D$ is the sum $\op{deg}(D):=\sum_v n_v\in \Z$, which defines a homomorphism
\[\op{deg}:\op{Div}(X)\rightarrow \Z,\] the kernel of which is denoted $\op{Div}^0(X)$. Let $\mathcal{M}(X)$ be the abelian group of $\Z$-valued functions on $V_X$. We note that $\mathcal{M}(X)$ can be freely generated by the characteristic functions $\chi_v$ defined by
\[\chi_v(v'):=\begin{cases}
&1 \text{ if }v'=v;\\
&0 \text{ otherwise.}\\
\end{cases}\]
Set $\op{div}(\chi_v):=\sum_{w\in V_X} \rho_{w}(v)w$, where
\[\rho_w(v):=\begin{cases}
\op{val}_X(v)-2 \cdot \text{ number of loops at }v & \text{ if }w=v;\\
-\text{ number of edges from }w\text{ to }v &\text{ if }w\neq v.
\end{cases}\]
With this notation in hand, we extend $\op{div}(\chi_v)$ to a map ${\rm div}:\mathcal{M}(X) \to {\rm Div}(X)$ as follows. For $f \in \mathcal{M}(X)$, one has that
$${\rm div}(f) = -\sum_{v}m_{v}(f) \cdot v,$$
where
$${m_{v}(f)} = \sum_{e \in E_{X,v}^+}\left(f(t(e)) - f(o(e))\right).$$
Note that $\op{div}(f)$ has degree $0$. We let $\op{Pr}(X)$ be the image of $\op{div}$, and set $\op{Pic}^0(X):=\op{Div}^0(X)/\op{Pr}(X)$. Set $\kappa_X$ to be the cardinality of $\op{Pic}^0(X)$. This quantity is analogous to the notion of the \emph{class number of a number ring} and referred to in the literature as the \emph{complexity} of $X$. For a more comprehensive account, see \cite{divisorsandsandpiles}.

\par We come to the notion of a Galois cover of graphs. First, we introduce the notion of a \emph{morphism} $f:Y\rightarrow X$ between graphs. This consists of a pair $(f_V, f_E)$, where $f_V: V_Y\rightarrow V_X$ and $f_E: E_Y^+\rightarrow E_X^+$ are functions satisfying the following compatibility relations:

\begin{enumerate}[label=(\alph*)]

\item $f_{V}(o(e)) = o(f_E(e))$,

\item $f_{V}(t(e)) = t(f_E(e))$,

\item $\iota\left(f_{E}(e)\right) = f_{E}\left(\iota(e)\right)$.

\end{enumerate}

We use $f$ to denote $f_V$ or $f_E$, depending on the context.\begin{definition} \label{cover}

Let $X$ and $Y$ be two graphs and $f:Y\rightarrow X$ be a graph morphism. If $f$ satisfies the following conditions:

\begin{enumerate}[label=(\alph*)]

\item $f:V_{Y} \rightarrow V_{X}$ is surjective,

\item for all $w \in V_{Y}$, the restriction $f|_{E_{Y,w}^+}$ induces a bijection

$$f|_{E_{Y,w}^+}:E_{Y,w}^+ \stackrel{\approx}{\rightarrow}{E_{X,f(w)}^+}, $$
\end{enumerate}then $f$ is said to be a \emph{cover}. The cover $f:Y \rightarrow X$ is called \emph{Galois} if the following two conditions are satisfied.

\begin{enumerate}

\item The graphs $X$ and $Y$ are connected.

\item The group ${\rm Aut}_{f}(Y/X):=\{ \sigma \in {\rm Aut}(Y) \, : \, f \circ \sigma = f\}$ acts transitively on the fiber $f^{-1}(v)$ for all $v \in V_{X}$.

\end{enumerate}

\end{definition}

We denote a Galois cover $f:Y\rightarrow X$ also by $Y/X$ and suppress the role of the covering map. Moreover, we set $\op{Gal}(Y/X):=\op{Aut}_f(Y/X)$.

\subsection{Artin--Ihara L-functions}
\par Before introducing the Iwasawa theory of graphs, we discuss the role of Artin-Ihara L-functions. These are essentially graph-theoretic analogs of Artin L-functions. The standard reference for the content of this subsection is \cite{Terras:2011}.

\par Let $X$ be a graph and $a_1, \dots, a_k\in E_X^+$ such that for $i<k$, one has that $t(a_i)=o(a_{i+1})$. Then, the sequence $a_1, \dots, a_k$ gives rise to a \emph{walk} $w=a_1a_2\dots a_k$. Here, $k$ is the \emph{length} of $w$ and is denoted by $l(w)$. The walk $w$ is said to have a \emph{backtrack} if $\iota(a_i)=a_{i+1}$ for some $i<k$. It is said to have a \emph{tail} if $a_k=\iota(a_1)$. A \emph{cycle} is a walk such that $o(a_1)=t(a_k)$. A cycle $\mathfrak{c}$ is said to be a prime if it has no backtrack or tail, and there is no cycle $u$ and integer $f>1$ for which $\mathfrak{c}\neq u^f$. In other words, $\mathfrak{c}$ is a prime if one can go around it only once. Let $Y/X$ be a Galois cover with abelian Galois group $G:=\op{Gal}(Y/X)$. Set $\widehat{G}:=\op{Hom}(G, \mathbb{C}^{\times})$ to be the group of characters of $G$. Given a character $\psi \in \widehat{G}$, the Artin-Ihara L-function is defined as follows:

\[L_{Y/X}(u, \psi):=\prod_{\mathfrak{c}}\left(1 - \psi\left(\frac{Y/X}{\mathfrak{c}} \right)u^{l(\mathfrak{c})} \right)^{-1}.\]

In the above product, $\mathfrak{c}$ runs over all primes of $X$ and $\left(\frac{Y/X}{\mathfrak{c}} \right)\in G$ refers to the Frobenius automorphism at $\mathfrak{c}$ (cf. \cite[Definition 16.1]{Terras:2011}). When $Y\rightarrow X$ is the identity $X\xrightarrow{\op{Id}} X$, we recover the \emph{Ihara zeta function} $\zeta_X(u):=L_{X/X}(u, 1)$.

\par Given a connected graph $X$, let $\chi(X)$ denote its \emph{Euler characteristic}, which is defined as follows:

\[\chi(X)=|V_X|-|E_X|.\]

It follows from the Assumption \ref{no vertices with val 1} that $\chi(X)< 0$.

Let $Y/X$ represent a covering of graphs with the property that it is abelian, having an automorphism group denoted as $G$. Suppose, for each $i$ from 1 to $g_X$, $w_i$ is a specific vertex chosen from the fiber of $v_i$. Considering $\sigma$ as an element of $G$, we define the matrix $A(\sigma)$ as a $g_X \times g_X$ matrix, with its entries denoted by $a_{i,j}(\sigma)$. This is determined as follows:

\begin{equation*}
a_{i,j}(\sigma) =
\begin{cases}
\text{Twice the number of loops at the vertex }w_{i}, &\text{ if } i=j \text{ and } \sigma = 1;\\
\text{The number of edges connecting $w_i$ to $w_{j}^{\sigma}$}, &\text{ otherwise}.
\end{cases}
\end{equation*}

For $\psi \in \widehat{G}$ (the character group of $G$), we define $A_\psi$ as the twisted sum of the $A(\sigma)$ matrices

\[A_{\psi} =A_{\psi, X}:= \sum_{\sigma \in G} \psi(\sigma) \cdot A(\sigma).\]

Let $D$ be the matrix $(d_{i,j})$ with \[d_{i, j}:=\begin{cases}
    0 &\text{ if }i\neq j;\\
    \op{deg}(v_i) &\text{ if }i=j.
\end{cases}\]

If $Y/X$ serves as an abelian covering of multigraphs and $\psi$ represents a character of $G = \op{Gal}(Y/X)$, then according to the three-term determinant formula from \cite[Theorem 18.15]{Terras:2011} applied to the Artin-Ihara $L$-function, we have:

\begin{equation}\label{LY/X formula} L_{Y/X}(u,\psi)^{-1} = (1-u^{2})^{-\chi(X)} \cdot \text{det}(I - A_{\psi}u + (D-I)u^{2}).\end{equation}
We let $h_X(u, \psi):=\op{det}\left(I - A_{\psi}u + (D-I)u^{2}\right)$ and for ease of notation, set $h_X(u):=h_X(u, 1)$. The result below gives us an explicit relationship between the derivative of $h_X$ and the complexity of $X$.

\begin{theorem}\label{class number formula thm}
    For a graph $X$ satisfying Assumption \ref{no vertices with val 1}, one has that 
    \[h_X'(1)=-2\chi(X) \kappa_X.\]
\end{theorem}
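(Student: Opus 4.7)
The plan is to evaluate $h_X'(1)$ via Jacobi's formula for the derivative of a determinant. Set $M(u) := I - Au + (D-I)u^2$, so that $h_X(u) = \det M(u)$. At $u=1$ one has $M(1) = D - A =: L$, the combinatorial Laplacian of $X$. Since $X$ is connected and $A$ is symmetric (the inversion map $\iota$ pairs up directed edges), $L$ is symmetric with one-dimensional kernel spanned by the all-ones vector $\mathbf{1}$; in particular $h_X(1) = \det L = 0$. Jacobi's formula then yields
\[ h_X'(1) = \tr\bigl(\op{adj}(L) \cdot M'(1)\bigr).\]

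The key input is the identification $\op{adj}(L) = \kappa_X J$, where $J$ is the $|V_X|\times|V_X|$ all-ones matrix. Unwinding the formula for $\op{div}(\chi_v)$ given above, one sees that $\op{Pr}(X)$ is exactly the image of $L$ acting on $\Z^{V_X}$, so $|\op{Pic}^0(X)|$ equals the order of the torsion part of $\Z^{V_X}/L\Z^{V_X}$. Via Smith normal form and the Matrix-Tree Theorem of Kirchhoff, this quantity equals any $(|V_X|-1)\times(|V_X|-1)$ cofactor of $L$, so all such cofactors equal $\kappa_X$. Symmetry of $L$ together with $L\mathbf{1} = 0 = \mathbf{1}^{T}L$ then forces every entry of $\op{adj}(L)$ to agree in sign as well, yielding $\op{adj}(L) = \kappa_X J$.

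Finally, using $\tr(JN) = \sum_{i,j}N_{ij}$ for any matrix $N$, I would expand $M'(1) = -A + 2(D-I)$ and invoke the handshake relation $\sum_v \op{deg}(v) = 2|E_X|$ to see that the entry-sums of both $A$ and $D$ equal $2|E_X|$, while that of $I$ equals $|V_X|$. Hence
\[ \tr\bigl(J \cdot M'(1)\bigr) = -2|E_X| + 4|E_X| - 2|V_X| = 2(|E_X| - |V_X|) = -2\chi(X),\]
giving $h_X'(1) = -2\chi(X)\kappa_X$, as claimed.

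The only delicate step is the identification $\op{adj}(L) = \kappa_X J$; the remaining computations are a routine trace calculation and an application of the handshake lemma. This identification is standard for finite connected multigraphs, and the argument goes through unchanged in the presence of loops since loops contribute equally to the diagonals of $D$ and $A$ and therefore leave $L$ unaltered on the diagonal.
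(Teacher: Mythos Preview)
Your argument is correct. The paper does not actually prove this theorem; it simply cites Northshield and Hammer--Mattman--Sands--Valli\`eres for the result. Your proof via Jacobi's formula, the identification $\op{adj}(L)=\kappa_X J$ from the Matrix--Tree theorem, and the handshake computation is the standard route and is essentially what one finds in those references. The step you flag as delicate---that all cofactors of $L$ coincide and equal $\kappa_X$---is handled exactly as you outline: from $L\cdot\op{adj}(L)=0=\op{adj}(L)\cdot L$ and the one-dimensionality of $\ker L$ one gets $\op{adj}(L)=cJ$, and reading off a diagonal entry gives $c=\kappa_X$. Your remark about loops is also on point for the multigraph conventions in the paper: each undirected loop contributes $2$ to both $\op{deg}(v)$ and $a_{v,v}$, so $L$ is unaffected, and the entry-sum identities $\sum_{i,j}a_{i,j}=\sum_i\op{deg}(v_i)=|E_X^+|=2|E_X|$ hold as stated. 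In short, you have supplied the proof that the paper outsources.
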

\begin{proof}
    For a proof of the result, we refer to \cite{Northshield} or \cite[Theorem 2.11]{HammerMattmanSandsVallieres}.
\end{proof}

For abelian covers, the Artin-Ihara L-functions described satisfy a relation as a consequence of the Artin formalism, as the following result shows.

\begin{theorem}\label{artin formalism thm}
    Let $Y/X$ be an abelian Galois cover of graphs with $G=\op{Gal}(Y/X)$, then one has
    \[\zeta_Y(u)=\zeta_X(u)\times \prod_{\psi\in \widehat{G}, \psi\neq 1} L_{Y/X}(u, \psi),\] where $1\in \widehat{G}$ denotes the trivial character.
\end{theorem}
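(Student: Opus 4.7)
My plan is to mimic the classical proof of Artin formalism for number fields, adapted to the graph-theoretic setting via the Euler product description of the L-functions. The key ingredient is the splitting behavior of primes in an abelian cover, combined with a standard identity from character theory.

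First, I would express both sides as Euler products. By definition,
\[
\zeta_Y(u) = \prod_{\mathfrak{C}} \left(1 - u^{l(\mathfrak{C})}\right)^{-1}, \qquad L_{Y/X}(u,\psi) = \prod_{\mathfrak{c}} \left(1 - \psi\!\left(\tfrac{Y/X}{\mathfrak{c}}\right) u^{l(\mathfrak{c})}\right)^{-1},
\]
where $\mathfrak{C}$ ranges over primes of $Y$ and $\mathfrak{c}$ over primes of $X$. The plan is to group the factors on the left according to which prime $\mathfrak{c}$ of $X$ each prime $\mathfrak{C}$ of $Y$ lies above, and to match this grouping with the product over characters on the right.

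Second, I would establish the splitting law for primes in the cover $Y/X$. For an abelian cover, the Frobenius $\sigma_{\mathfrak{c}} := \left(\tfrac{Y/X}{\mathfrak{c}}\right)\in G$ depends only on $\mathfrak{c}$ (not on a choice of prime above it), since $G$ is abelian. Let $f_{\mathfrak{c}}$ denote the order of $\sigma_{\mathfrak{c}}$. I would verify, using the graph-theoretic analogue of decomposition of primes (see \cite[Ch.~16]{Terras:2011}), that there are exactly $|G|/f_{\mathfrak{c}}$ primes $\mathfrak{C}$ of $Y$ above $\mathfrak{c}$, and each satisfies $l(\mathfrak{C}) = f_{\mathfrak{c}} \cdot l(\mathfrak{c})$. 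Consequently,
\[
\prod_{\mathfrak{C} \mid \mathfrak{c}}\left(1 - u^{l(\mathfrak{C})}\right)^{-1} = \left(1 - u^{f_{\mathfrak{c}}\, l(\mathfrak{c})}\right)^{-|G|/f_{\mathfrak{c}}}.
\]

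Third, I would invoke the representation-theoretic identity: for any $\sigma \in G$ of order $f$,
\[
\prod_{\psi \in \widehat{G}} \bigl(1 - \psi(\sigma) t\bigr) = \bigl(1 - t^{f}\bigr)^{|G|/f},
\]
which follows because $\psi \mapsto \psi(\sigma)$ ranges over all $f$-th roots of unity, each attained $|G|/f$ times as $\psi$ varies over $\widehat{G}$. Applying this with $t = u^{l(\mathfrak{c})}$ and $\sigma = \sigma_{\mathfrak{c}}$ gives
\[
\prod_{\psi \in \widehat{G}} \bigl(1 - \psi(\sigma_{\mathfrak{c}}) u^{l(\mathfrak{c})}\bigr)^{-1} = \bigl(1 - u^{f_{\mathfrak{c}}\, l(\mathfrak{c})}\bigr)^{-|G|/f_{\mathfrak{c}}} = \prod_{\mathfrak{C}\mid \mathfrak{c}}\bigl(1 - u^{l(\mathfrak{C})}\bigr)^{-1}.
\]
Taking the product over all primes $\mathfrak{c}$ of $X$ and interchanging the order of the products on the left yields $\zeta_Y(u) = \prod_{\psi \in \widehat{G}} L_{Y/X}(u,\psi)$. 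Separating the trivial character, for which $L_{Y/X}(u,1) = \zeta_X(u)$ from the definition, produces the claimed identity.

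The main subtlety I anticipate is verifying the decomposition law for primes in the second step, specifically that the Frobenius is well-defined on $\mathfrak{c}$ (not merely on a prime above) and that its order governs both the ramification-free splitting count $|G|/f_{\mathfrak{c}}$ and the length multiplication $l(\mathfrak{C}) = f_{\mathfrak{c}}\, l(\mathfrak{c})$. Once this combinatorial fact is in hand, the remainder is a direct application of the character identity and is essentially formal.
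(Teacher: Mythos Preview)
Your proposal is correct and follows the classical Euler-product argument for Artin formalism. The paper does not give its own proof of this statement at all: it simply cites \cite[Corollary 18.11]{Terras:2011} and moves on. So there is nothing to compare against beyond noting that your sketch is essentially the proof one finds in Terras (specialized to abelian $G$, where the induced-representation machinery collapses to the character identity you wrote down). The only point worth tightening is the second step: in the graph setting covers are automatically unramified, so the decomposition law you need is exactly that the primes of $Y$ above $\mathfrak{c}$ are in bijection with the cosets $G/\langle\sigma_{\mathfrak{c}}\rangle$, each of length $f_{\mathfrak{c}}\,l(\mathfrak{c})$; this is established in \cite[Theorem~16.5 and the discussion in Chapter~16]{Terras:2011}, and once you have it the rest is indeed formal.
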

\begin{proof}
    For a proof of this result, see \cite[Corollary 18.11]{Terras:2011}.
\end{proof}
The Artin formalism described above gives us a formula for the complexity of $Y$. This formula is expressed in terms of the complexity of $X$ and a product of special values of the \emph{twisted polynomials} $h_X(u, \psi)$. 
\begin{corollary}\label{class number relation corollary}
     Let $X$ be a graph satisfying Assumption \ref{no vertices with val 1} and $Y/X$ be an abelian Galois cover of $X$ with $G:=\op{Gal}(Y/X)$. Then, the following relationship holds
     \[|G|\kappa_Y=\kappa_X \prod_{\psi\in \widehat{G}, \psi\neq 1} h_X(1, \psi).\]
\end{corollary}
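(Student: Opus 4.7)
My plan is to derive this identity as a formal consequence of Theorems \ref{artin formalism thm} and \ref{class number formula thm}, together with the three-term determinant formula \eqref{LY/X formula}. The first step is to promote the $L$-function identity furnished by Artin formalism to a polynomial identity among the twisted determinants $h_X(u,\psi)$ and $h_Y(u)$; the complexity relation will then drop out by evaluating derivatives at $u=1$ and invoking the class number formula on both sides.

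Concretely, I would take reciprocals in Theorem \ref{artin formalism thm} and substitute \eqref{LY/X formula} throughout to obtain
\[
(1-u^{2})^{-\chi(Y)} h_{Y}(u) \;=\; (1-u^{2})^{-\chi(X)} h_{X}(u) \prod_{\psi \in \widehat{G},\, \psi \neq 1} (1-u^{2})^{-\chi(X)} h_{X}(u, \psi).
\]
Since $Y \to X$ is a Galois cover of degree $|G|$, each vertex and each edge of $X$ has exactly $|G|$ preimages in $Y$, which gives $\chi(Y) = |G|\,\chi(X)$. The $(1-u^{2})$-factors therefore balance, leaving the cleaner polynomial identity
\[
h_{Y}(u) \;=\; h_{X}(u) \prod_{\psi \neq 1} h_{X}(u, \psi).
\]

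Finally I would evaluate this near $u = 1$. A direct check shows $h_{X}(1) = \det(D - A)$ (with loops counted twice in $A$), which is the combinatorial Laplacian of $X$; this is singular since the all-ones vector lies in its kernel, and the same reasoning gives $h_{Y}(1) = 0$. Differentiating the polynomial identity and setting $u = 1$ kills the Leibniz term carrying the factor $h_{X}(1)$, yielding
\[
h_{Y}'(1) \;=\; h_{X}'(1) \prod_{\psi \neq 1} h_{X}(1, \psi).
\]
Applying Theorem \ref{class number formula thm} to both $X$ and $Y$, substituting $\chi(Y) = |G|\chi(X)$, and cancelling the common factor $-2\chi(X)$, which is nonzero by Assumption \ref{no vertices with val 1}, produces the claimed equality. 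The only points that require a moment of care are the multiplicativity $\chi(Y) = |G|\chi(X)$ and the vanishing $h_{X}(1) = 0$; both are standard, so I do not anticipate any substantive obstacle in carrying out this argument.
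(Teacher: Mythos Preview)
Your argument is correct and is precisely the computation the paper is pointing to when it says the result is an immediate consequence of Theorems \ref{class number formula thm} and \ref{artin formalism thm} (with details deferred to \cite[p.~440]{Vallieres:2021}): pass from the $L$-function identity to an identity among the $h$-polynomials via \eqref{LY/X formula} and $\chi(Y)=|G|\chi(X)$, differentiate at $u=1$ using $h_X(1)=\det(D-A)=0$, and apply the class number formula on both sides. The only small addendum worth recording is that Theorem \ref{class number formula thm} applies to $Y$ because a cover preserves vertex degrees and $\chi(Y)=|G|\chi(X)\neq 0$, so Assumption \ref{no vertices with val 1} is inherited.
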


\begin{proof}
    The result is an immediate consequence of Theorem \ref{class number formula thm} and Theorem \ref{artin formalism thm}, see \cite[p.440]{Vallieres:2021} for further details.
\end{proof}

\begin{remark}Some remarks are in order. 
\begin{enumerate}
    \item It follows from the above relation in particular that $h_X(1, \psi)\neq 0$ for all $\psi\in \widehat{G}$ such that $\psi\neq 1$.
    \item The special value of the Artin-Ihara L-function at $u=1$ has been studied by Hammer, Mattman, Sands and Vallieres, cf. \cite{HammerMattmanSandsVallieres}.
\end{enumerate}
\end{remark}

\section{Iwasawa theory of graphs}\label{s 3}
\par The Iwasawa theory of graphs is a branch of mathematics that combines topology, combinatorics, and the Galois theory of covers of graphs. In this section, we summarize the key ideas and set up relevant notion. For a comprehensive account, please see \cite{Vallieres:2021,Gonet:2021a, mcgownvallieresII, mcgownvallieresIII}.
\subsection{$\Z_\ell$-covers of graphs}\label{section 3.1}
Throughout this section, we choose a section \[\gamma: E_X\rightarrow E_X^+\] of $\pi$ and set $\mathcal{S}:=\gamma(E_X)$. This set $\mathcal{S}$ is referred to as an \emph{orientation} of $X$, the understanding here is that for any edge, the orientation prescribes a direction to this edge. \begin{definition}
    Let $\cG$ be a finite group, a \emph{voltage} assignment valued in $\cG$ is a function $\alpha: \mathcal{S}\rightarrow \cG$. We extend $\alpha$ to all of $E_X^+$ by setting $\alpha(\iota(e)):=\alpha(e)^{-1}$.
\end{definition} 

   Let $(\mathcal{S}, \alpha)$ be a pair consisting of an orientation $\mathcal{S}$ and a voltage assignment $\alpha: \mathcal{S}\rightarrow \cG$. Let $(\mathcal{S}', \alpha')$ be another pair. Then we define as equivalence $(\mathcal{S}, \alpha)\sim (\mathcal{S}', \alpha')$ if the extensions of $\alpha$ and $\alpha'$ to $E_X^+$ coincide. Given a pair $(\mathcal{S}, \alpha)$ and another orientation $\mathcal{S}'$, then it is easy to see that there is a unique voltage assignment $\alpha': \mathcal{S}'\rightarrow \cG$ such that $(\mathcal{S}, \alpha)\sim (\mathcal{S}', \alpha')$. Associated to the datum $(\cG, \cS, \alpha)$ is a graph $X(\cG, \cS, \alpha)$, which we describe as follows. The set of vertices of $X(\cG, \cS, \alpha)$ is $V_X\times \cG$. On the other hand, the set of edges is identified with the set $E_X^+\times \cG$, where $(e, \sigma)\in E_X^+\times \cG$ is the edge that connects $(o(e), \sigma)$ to $(t(e), \sigma\cdot \alpha(e))$. On the other hand, the inversion map is given by $\overline{(e,\sigma)} = \left(\iota(e),\sigma \cdot \alpha(e)\right)$. It is clear that if $(\mathcal{S}, \alpha)\sim (\mathcal{S}', \alpha')$, then $X(\cG, \cS, \alpha)=X(\cG, \cS', \alpha')$.
\par This operation is functorial and results in Galois covers of $X$. Consider a voltage assignment $\alpha: \cS\rightarrow \cG$ and a group homomorphism $f:\cG \to \cG_{1}$. Then, there is a natural morphism of multigraphs denoted \( f_{*}:X(\cG,\mathcal{S},\alpha) \to X(\cG_{1},\mathcal{S},f \circ \alpha) \). This morphism is defined on vertices and edges as follows
\[ f_{*}(v,\sigma) = (v,f(\sigma)) \text{ and } f_{*}(e,\sigma) = (e,f(\sigma)). \]
If both \( X(\cG,\mathcal{S},\alpha) \) and \( X(\cG_1,\mathcal{S},f \circ \alpha) \) are connected, and \( f \) is surjective, then \( f_{*} \) is a cover according to Definition \ref{cover}. Moreover, it is a Galois cover, with the group of covering transformations being isomorphic to $\text{ker}(f)$. In particular, if $f:\cG \to \{ 1\}$ represents the group morphism into the trivial group and both $X$ and $X(\cG,\mathcal{S},\alpha)$ are connected, then a Galois cover, denoted $f_{*}:X(\cG,\mathcal{S},\alpha) \to X$, is obtained, with the group of covering transformations isomorphic to $\cG$.
\par We now recall the notion of a $\Z_\ell$-tower of multigraphs.
\begin{definition} \label{tower}
Let $\ell$ be a rational prime, and let $X$ be a connected graph. A $\mathbb{Z}_{\ell}$-tower over $X$ consists of a series of covers of connected graphs

$$X = X_{0} \leftarrow X_{1} \leftarrow \ldots \leftarrow X_{n} \leftarrow \ldots$$such that for every positive integer $n$, the cover $X_n / X$ obtained by composing the covers is Galois with Galois group $\op{Gal}(X_n / X)$ isomorphic to $\mathbb{Z}/\ell^{n}\mathbb{Z}$.
\end{definition}
\par We call a function $\alpha: \cS\rightarrow \Z_\ell$ a $\Z_\ell$-valued voltage assignment. Let \( \alpha_{/n} \) denote the mod-\( \ell^n \) reduction of \( \alpha \). This yields a \( \mathbb{Z}_{\ell} \)-tower over \( X \):
\[ X  \leftarrow X(\mathbb{Z}/\ell\mathbb{Z},\mathcal{S},\alpha_{/1}) \leftarrow X(\mathbb{Z}/\ell^{2}\mathbb{Z},\mathcal{S},\alpha_{/2}) \leftarrow \ldots \leftarrow X(\mathbb{Z}/\ell^{k}\mathbb{Z},\mathcal{S},\alpha_{/k}) \leftarrow \ldots. \]
It turns out that all $\Z_\ell$-towers arise from voltage assignments, cf. \cite[section 2.3]{Ray/Vallieres:2022} for details. Setting $t := \#\mathcal{S}$, we choose an ordering $\mathcal{S}=\{s_1,\dots , s_t\}$ and represent $\alpha$ as a vector \( \alpha=(\alpha_1, \alpha_2,\dots, \alpha_t)\in \Z_\ell^t \), where \( \alpha_i \) denotes \( \alpha(s_i) \). To relate growth patterns in Picard groups to Iwasawa invariants, it is necessary to impose certain conditions on the multigraphs \( X(\Z/\ell^n \Z, \mathcal{S}, \alpha_{/n}) \).
\begin{ass}\label{main assumption}
We assume that the derived multigraphs $X(\Z/\ell^n \mathbb{Z}, S, \alpha_{/n})$ are connected for all $n\geq 0$.
\end{ass}
\par We describe an explicit condition that $X(\cG, \mathcal{S}, \alpha)$ is connected. If $w = a_{1} a_{2}  \ldots  a_{n}$ is a walk in $X$, then we define 
$$\alpha(w) = \alpha(e_{1}) \cdot \ldots \cdot \alpha(e_{n}) \in \cG. $$ Recall that $\alpha: \mathcal{S}\rightarrow \cG$ satisfies the condition that $\alpha(\iota(e))=\alpha(e)^{-1}$. 
This implies that if $c_{1}$ and $c_{2}$ are homotopically equivalent, then $\alpha(c_{1}) = \alpha(c_{2})$. Choose a vertex $v_{0} \in V_{X}$, and let $\pi_1(X, v_0)$ be the fundamental group of $X$ with base-point $v_0$. We deduce that $\alpha$ induces a group homomorphism
\begin{equation} \label{group_mor}
\rho_{\alpha}:\pi_{1}(X,v_{0}) \rightarrow \cG, 
\end{equation}
defined by $\rho_{\alpha}([\gamma]) = \alpha(\gamma)$.
\begin{theorem}\label{rho is surjective}
    Assume that $X$ is a connected graph. Then, $X(\cG, \mathcal{S}, \alpha)$ is connected if and only if $\rho_\alpha$ is surjective.
\end{theorem}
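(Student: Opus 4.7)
The plan is to describe the connected component of the base point $(v_0, 1_\cG)$ in $X(\cG, \mathcal{S}, \alpha)$ explicitly in terms of the voltage of walks in $X$, and then translate the surjectivity of $\rho_\alpha$ into the statement that this component is all of $V_X \times \cG$.

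Concretely, I would first observe that lifting walks is canonical: given any walk $w = a_1 a_2 \cdots a_n$ in $X$ starting at a vertex $v$, the sequence of edges $(a_1, \tau), (a_2, \tau \cdot \alpha(a_1)), \dots$ defines a unique walk $\widetilde{w}_\tau$ in $X(\cG, \mathcal{S}, \alpha)$ starting at $(v, \tau)$, and this walk terminates at $(t(w), \tau \cdot \alpha(w))$. In particular, two vertices $(v_0, 1_\cG)$ and $(v, \tau)$ lie in the same connected component of $X(\cG, \mathcal{S}, \alpha)$ if and only if there exists a walk $w$ from $v_0$ to $v$ in $X$ with $\alpha(w) = \tau$.

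Next, I would fix $v \in V_X$ and any one walk $w_v$ from $v_0$ to $v$; such a walk exists because $X$ is connected. For any other walk $w$ from $v_0$ to $v$, the concatenation $w \cdot \iota(w_v)$ is a closed walk at $v_0$, and since $\alpha$ is multiplicative on walks and inverts on $\iota$, we have $\alpha(w) = \rho_\alpha([w \cdot \iota(w_v)]) \cdot \alpha(w_v)$. Thus the set
\[
\{\alpha(w) : w \text{ is a walk from } v_0 \text{ to } v\} = H \cdot \alpha(w_v),
\]
where $H := \operatorname{image}(\rho_\alpha)$. Combining this with the first step, the connected component of $(v_0, 1_\cG)$ in $X(\cG, \mathcal{S}, \alpha)$ is exactly
\[
\bigl\{(v, \tau) \in V_X \times \cG \;:\; \tau \in H \cdot \alpha(w_v)\bigr\}.
\]

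Finally, I would read off both implications from this description. If $\rho_\alpha$ is surjective, then $H = \cG$ and the component is the full vertex set, so $X(\cG, \mathcal{S}, \alpha)$ is connected. Conversely, if $X(\cG, \mathcal{S}, \alpha)$ is connected, then taking $v = v_0$ (with $w_{v_0}$ the empty walk, so $\alpha(w_{v_0}) = 1_\cG$) forces $H = \cG$, i.e., $\rho_\alpha$ is surjective. There is no substantial obstacle here; the only care required is in checking that $\alpha$ is well-defined on homotopy classes of closed walks so that $\rho_\alpha$ is a bona fide group homomorphism, but this is immediate from $\alpha(\iota(e)) = \alpha(e)^{-1}$ together with the fact that $\pi_1(X, v_0)$ is freely generated by the closed walks obtained from edges not in a fixed spanning tree.
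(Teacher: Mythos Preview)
Your argument is correct and is precisely the standard covering-space (voltage-graph) proof: unique path lifting identifies the connected component of $(v_0,1_\cG)$ with the coset $H\cdot\alpha(w_v)$ in each fiber, where $H=\operatorname{image}(\rho_\alpha)$, and connectedness is then equivalent to $H=\cG$. The only small thing to tighten is the sentence about well-definedness of $\rho_\alpha$: you do not need the spanning-tree description of $\pi_1(X,v_0)$ at all, since the relation $\alpha(\iota(e))=\alpha(e)^{-1}$ alone already forces $\alpha$ to be invariant under elementary homotopies (inserting or deleting a backtrack $e\,\iota(e)$), which is all that is required.

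As for comparison with the paper: the paper does not give its own argument here but simply cites \cite[Theorem~2.11]{Ray/Vallieres:2022}. So there is no in-paper proof to compare against; your write-up supplies exactly the kind of direct proof one would expect the cited reference to contain.
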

\begin{proof}
    For a proof of this result, cf. \cite[Theorem 2.11]{Ray/Vallieres:2022}.
\end{proof}

\subsection{The Iwasawa polynomial}
\par Let $X$ be a connected graph. The matrix $D_X=(d_{i,j})$ is the $g_X\times g_X$ matrix for which 
\[d_{i, j}:=\begin{cases}
    \op{deg}(v_i) &\text{ if }i=j;\\
     0 & \text{ if }i\neq j.\\
\end{cases}\]
 The difference matrix $Q_X:=D_X-A_X$ is referred to as the \emph{Laplacian matrix}.

\par The rings $\Z_\ell[x]$ and $\Z_\ell\llbracket T \rrbracket $ denote the polynomial ring and formal power series ring with coefficients in $\Z_\ell$, respectively. The ring $\Z_\ell[x;\Z_\ell]$ consists of expressions in the form $f(x)=\sum_{a} c_a x^a$, where $a\in \Z_\ell$ and the coefficients $c_a$ belong to $\Z_\ell$. Let $\alpha: \cS\rightarrow \Z_\ell$ be a function and extend $\alpha$ to $E_X^+$ such that $\alpha(\iota(e))=-\alpha(e)$. The matrix $M(x)$ associated to the pair $(X,\alpha)$ has entries in $\Z_\ell[x; \Z_\ell]$ and is defined by\begin{equation}\label{def of M} M(x)=M_{X,\alpha}(x):=D_X - \left(\sum_{\substack{e \in E_X^+ \\ {i}(e) = (v_{i},v_{j})}} x^{\alpha(e)}\right)_{(i,j)}.\end{equation} We also introduce the notation ${b\choose n}$ for $\frac{b(b-1)\dots (b-n+1)}{n!}$, and define $(1+T)^b$ as the formal power series $\sum_{n=0}^\infty {b\choose n} T^n$. 
\begin{definition}\label{definition of the Iwasawa polynomial}
    With respect to notation above, the \emph{Iwasawa polynomial} associated to the $\Z_\ell$-tower defined by $\alpha$ is defined as follows
    \[f_{X, \alpha}(T):=\op{det}M(1+T)\in \Z_\ell\llbracket T\rrbracket.\]
\end{definition}
As defined above, the $f_{X, \alpha}(T)$ is not in general a polynomial, however, becomes one upon multiplication by a suitably chosen unit in $\Z_\ell\llbracket T\rrbracket$. This unit can be taken to be a suitably large power of $(1+T)$. 
\par Let $\zeta_{\ell^{n}}$ be a primitive $\ell^n$-th root of unity. For the tower
\begin{equation}\label{alphatower}X  \leftarrow X(\mathbb{Z}/\ell\mathbb{Z},\cS,\alpha_{/1}) \leftarrow X(\mathbb{Z}/\ell^{2}\mathbb{Z},\cS,\alpha_{/2}) \leftarrow \ldots \leftarrow X(\mathbb{Z}/\ell^{n}\mathbb{Z},\cS,\alpha_{/n}) \leftarrow \ldots\end{equation}
related to $\alpha$, for any positive integer $n$, it follows from \cite[Corollary 5.6]{mcgownvallieresIII} that
\begin{equation} \label{spe_pow1}
f_{X,\alpha}(1- \zeta_{\ell^{n}}) = h_{X}(1,\psi_{n}), 
\end{equation}
where $\psi_{n}$ denotes the character of $\mathbb{Z}/\ell^{n}\mathbb{Z}$, defined by $\psi_{n}(\bar{1}) = \zeta_{\ell^{n}}$.
\begin{lemma}
    The Iwasawa polynomial $f_{X, \alpha}(T)$ is divisible by $T$. 
\end{lemma}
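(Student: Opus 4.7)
The plan is to show that $f_{X,\alpha}(0) = 0$, which immediately gives $T \mid f_{X,\alpha}(T)$ in $\Z_\ell\llbracket T \rrbracket$. To do this, I would evaluate the matrix $M(1+T)$ at $T = 0$ and identify it with the Laplacian matrix of $X$, whose determinant vanishes for a connected graph.

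More concretely, recall that for $b \in \Z_\ell$ the formal power series $(1+T)^b$ is defined as $\sum_{n\geq 0}\binom{b}{n}T^n$, so setting $T=0$ gives $(1+T)^b|_{T=0} = 1$ for any $b \in \Z_\ell$. Applying this to each entry of $M(1+T)$ as defined in \eqref{def of M}, the $(i,j)$-entry at $T=0$ becomes
\[
M(1)_{i,j} = \delta_{i,j}\deg(v_i) - \#\{e \in E_X^+ : i(e) = (v_i,v_j)\},
\]
which is precisely the $(i,j)$-entry of the Laplacian matrix $Q_X = D_X - A_X$. Therefore
\[
f_{X,\alpha}(0) = \det M(1) = \det Q_X.
\]

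To conclude, I would invoke the standard fact that $Q_X$ is singular: since the row sums of $Q_X$ vanish (each diagonal entry $\deg(v_i)$ equals the sum of the corresponding row of $A_X$), the all-ones vector $(1,1,\dots,1)^{T}$ lies in the kernel of $Q_X$, so $\det Q_X = 0$. Hence $f_{X,\alpha}(0) = 0$, and the Weierstrass division (or simply writing $f_{X,\alpha}(T) = \sum_{n\geq 0} a_n T^n$ with $a_0 = 0$) yields $T \mid f_{X,\alpha}(T)$ in $\Z_\ell\llbracket T\rrbracket$.

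There is no real obstacle here; the only small subtlety worth mentioning is to justify specializing the power series $(1+T)^{\alpha(e)}$ at $T=0$ entry-by-entry before taking the determinant, which is legitimate because evaluation at $T = 0$ is a ring homomorphism $\Z_\ell\llbracket T\rrbracket \to \Z_\ell$ that commutes with $\det$. The connectedness hypothesis on $X$ in Assumption \ref{no vertices with val 1} is what guarantees that the kernel of $Q_X$ is exactly spanned by the all-ones vector, but for the divisibility statement we only need $\det Q_X = 0$, which follows from the row-sum argument above for any graph.
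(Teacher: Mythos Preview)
Your proof is correct and follows essentially the same approach as the paper: evaluate $f_{X,\alpha}$ at $T=0$, identify $M(1)$ with the Laplacian $Q_X$, and observe that the all-ones vector lies in the kernel of $Q_X$, so $\det Q_X=0$. Your write-up is more detailed than the paper's (spelling out why $(1+T)^b|_{T=0}=1$ and why evaluation commutes with the determinant), but the argument is the same.
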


\begin{proof}
    We find that $f_{X,\alpha}(0)=\op{det}\left(M(1+0)\right)=\op{det}\left(Q_X\right)$. It is easy to see that $Q_X$ is a singular matrix, indeed, $u:=(1, 1, \dots, 1)^t$ is in its null-space. Therefore, $f_{X,\alpha}(0)=0$, in other words, $T$ divides $f_{X, \alpha}(T)$. 
\end{proof}

\par Note that $f_{X,\alpha}(T)$ is a Laurent series in $(1+T)$. In light of the above result, we write $f_{X,\alpha}(T)=(1+T)^{-m} T g_{X, \alpha}(T)$, where $m\in \Z_{\geq 0}$ is the smallest number such that $g_{X, \alpha}(T)$ above is a polynomial. Of significance are the Iwasawa invariants that are associated to the Iwasawa polynomial. We recall that a polynomial $g(T)\in \Z_\ell[T]$ is a \emph{distinguished polynomial} if it is a monic polynomial and all the non-leading coefficients of $g(T)$ are divisible by $\ell$.
It follows from the $\ell$-adic Weierstrass Preparation theorem, that there is a factorization of the Iwasawa polynomial \[g_{X,\alpha}(T)=\ell^{\mu} P(T) u(T),\] where $P(T)$ is a distinguished polynomial and $u(T)$ is a unit in $\Z_\ell\llbracket T\rrbracket $. Since $u(T)$ is a unit in $\Z_\ell\llbracket T\rrbracket$, the constant term $u(0)$ is a unit in $\Z_\ell$. In particular, we have that $\op{deg} g\geq 1$. The invariants $\mu_{\ell}(X,\alpha):=\mu$ and $\lambda_{\ell}(X,\alpha):=\op{deg} P(T)$ are the $\mu$- and $\lambda$-invariant associated to the tower \eqref{alphatower}.

\begin{theorem}[Gonet \cite{Gonet:2021a, Gonet:2022}, Vallieres \cite{Vallieres:2021}, McGown--Vallieres \cite{mcgownvallieresII, mcgownvallieresIII}]\label{GonetVallieresthm}
    Let $X$ be a graph and $\alpha$ a $\Z_\ell$-valued voltage assignment such that Assumptions \ref{no vertices with val 1} and \ref{main assumption} are satisfied. For $n\in \Z_{\geq 1}$, set $X_n:=X(\Z/\ell^n \Z, \cS, \alpha_{/n})$ and $\kappa_\ell(X_n)$ denote the $\ell$-primary part of the complexity of $X_n$. Let $\mu:=\mu_\ell(X, \alpha)$ and $\lambda:=\lambda_\ell(X, \alpha)$. Then, there exists $n_0>0$ and $\nu\in \Z$ such that for all $n\geq n_0$,
    \[\kappa_\ell(X_n)=\ell^{(\ell^n\mu+n \lambda+\nu)}.\]
\end{theorem}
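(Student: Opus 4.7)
\medskip

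\noindent\textbf{Proof plan for Theorem \ref{GonetVallieresthm}.}

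The starting point is Corollary \ref{class number relation corollary} applied to the Galois cover $X_n / X$ with Galois group $G_n \cong \Z/\ell^n\Z$, which gives
\[
\ell^n \kappa_{X_n} \;=\; \kappa_X \prod_{\psi \in \widehat{G_n},\, \psi \neq 1} h_X(1, \psi).
\]
The plan is to compute the $\ell$-adic valuation of the right hand side by grouping the non-trivial characters $\psi$ of $G_n$ according to their order. For each $1 \leq k \leq n$, the characters of order exactly $\ell^k$ are in bijection with the primitive $\ell^k$-th roots of unity $\zeta$, and by \eqref{spe_pow1} the corresponding special value is $h_X(1,\psi) = f_{X,\alpha}(1-\zeta)$. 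Thus the task reduces to understanding $\prod_{\zeta}\, f_{X,\alpha}(1-\zeta)$ as $\zeta$ ranges over the primitive $\ell^k$-th roots of unity, for each $k$.

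Next, I would invoke the factorisation $f_{X,\alpha}(T) = (1+T)^{-m}\, T\, g_{X,\alpha}(T)$ together with the Weierstrass decomposition $g_{X,\alpha}(T) = \ell^{\mu} P(T) u(T)$, where $P(T)$ is distinguished of degree $\lambda$ and $u(T)$ is a unit in $\Z_\ell\llbracket T\rrbracket$. For $\zeta \neq 1$ an $\ell^k$-th root of unity, the quantity $1+T\big|_{T=1-\zeta} = 2-\zeta$ is an $\ell$-adic unit, so the factors $(1+T)^{-m}$ and $u(T)$ do not affect the $\ell$-adic valuation. Taking the product over primitive $\ell^k$-th roots $\zeta$ and using the norm from $\Q(\zeta_{\ell^k})$ down to $\Q$:
\begin{itemize}
\item the $T$ factor contributes $\prod_\zeta (1-\zeta) = \Phi_{\ell^k}(1) = \ell$, hence $v_\ell = 1$;
\item the constant $\ell^\mu$ contributes $v_\ell = \mu\, \varphi(\ell^k)$;
\item the distinguished polynomial $P(T)$ contributes $v_\ell = \lambda$ once $k$ is large enough that $v_\ell(1-\zeta) = 1/\varphi(\ell^k)$ is strictly smaller than the valuation of every root of $P$ (since then $v_\ell(P(1-\zeta)) = \lambda/\varphi(\ell^k)$ for each $\zeta$, and summing the $\varphi(\ell^k)$ Galois conjugates yields $\lambda$).
\end{itemize}

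Summing over $k = 1, \dots, n$ and using $\sum_{k=1}^n \varphi(\ell^k) = \ell^n - 1$, the total $\ell$-adic valuation of $\prod_{\psi \neq 1} h_X(1,\psi)$ equals $\mu(\ell^n - 1) + n(1 + \lambda) + C$ for a constant $C$ independent of $n$ (absorbing the bounded discrepancies at small $k$ and the contribution of $v_\ell(\kappa_X)$). Subtracting the $v_\ell(\ell^n) = n$ from the left hand side yields
\[
v_\ell(\kappa_{X_n}) \;=\; \mu\,\ell^n + n\lambda + \nu
\]
for some $\nu \in \Z$ and all $n \geq n_0$, as desired.

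\medskip

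\noindent\textbf{Main obstacle.} The delicate step is bounding the ``error at small $k$'': for those $k$ where $v_\ell(1-\zeta) = 1/\varphi(\ell^k)$ is not strictly less than the minimal valuation of the roots of $P$, the contribution of $P(T)$ per level can deviate from $\lambda$. One has to show that this deviation occurs only for finitely many $k$ (bounded in terms of $\lambda$ and the Newton polygon of $P$), so that the aggregate contribution is a constant absorbed into $\nu$. A secondary care point is to verify that the factor $(1+T)^{-m}$ really is a unit at each $T = 1-\zeta$ used, which amounts to the fact that $2-\zeta \in \Z_\ell[\zeta]^\times$ for $\zeta$ a non-trivial $\ell^k$-th root of unity; this is standard but must be checked for $\ell = 2$ as well (where one uses $\zeta \neq 1$ to ensure $\zeta - 1$ generates the maximal ideal, hence $1+T = 2-\zeta$ is a unit).
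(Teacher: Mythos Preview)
The paper does not give its own proof of this theorem: the ``proof'' consists of a single citation to \cite[Theorem 6.1]{mcgownvallieresIII}. So there is no in-paper argument to compare against.

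That said, your sketch is essentially the argument used in the cited references (Valli\`eres, McGown--Valli\`eres), which itself follows the classical Iwasawa template: combine the class-number relation of Corollary~\ref{class number relation corollary} with the special-value identity~\eqref{spe_pow1}, apply the Weierstrass decomposition of $g_{X,\alpha}(T)$, and compute $\ell$-adic valuations level by level. Your bookkeeping for the three pieces ($T$, $\ell^\mu$, $P(T)$) and the telescoping $\sum_{k=1}^n \varphi(\ell^k)=\ell^n-1$ are correct, and your identification of the ``main obstacle'' is exactly the right one: the deviation of $v_\ell\bigl(\prod_\zeta P(1-\zeta)\bigr)$ from $\lambda$ occurs only for the finitely many $k$ with $\varphi(\ell^k)\le \lambda$ (equivalently, for $k$ below a bound determined by the Newton polygon of $P$), so it is absorbed into $\nu$. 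The unit checks for $(1+T)^{-m}$ and $u(T)$ at $T=1-\zeta$ go through uniformly since $1-\zeta$ lies in the maximal ideal of $\Z_\ell[\zeta]$, hence $2-\zeta\equiv 1$ and $u(1-\zeta)\equiv u(0)$ modulo that ideal; no special treatment of $\ell=2$ is needed.
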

\begin{proof}
    The result above is \cite[Theorem 6.1]{mcgownvallieresIII}.
\end{proof}
\begin{remark}
 Vallieres proved a restricted version of the above result for certain $\Z_\ell$-towers consisting of Cayley--Serre multigraphs \cite[Theorem 5.6]{Vallieres:2021}. This result was subsequently generalized by McGown and Vallieres. The result was proven independently via a different method by Gonet. 
\end{remark}
\section{Cayley graphs and Iwasawa theory}\label{s 4}
\subsection{A factorization of the Iwasawa polynomial}
\par Let $G$ be a finite abelian group and $S$ be a subset of $G$ such that\begin{itemize}
    \item $S$ generates $G$,
    \item $S=S^{-1}$,
    \item $1\notin S$.
\end{itemize} We shall set $r:=\# S$. Associated with the pair $(G, S)$ is a Cayley graph, denoted by $X:=\mathrm{Cay}(G, S)$. This Cayley graph is defined by taking the vertex set $V=\{v_g\mid g\in G\}$ as the elements of $G$ and connecting two vertices $g_1$ and $g_2$ by an edge $e(g_1, g_2)$ if $g_1 g_2^{-1}\in S$. Since $S=S^{-1}$, it follows that there is an edge $e$ starting at $g_1$ and ending at $g_2$, precisely if there is an edge $\iota(e)$ starting at $g_2$ and ending at $g_1$. Since $1_G\notin S$, it follows that $\mathrm{Cay}(G, S)$ has no loops. Throughout, we assume that the Assumption \ref{no vertices with val 1} is satisfied.

\begin{definition}\label{def of beta}
    We shall consider voltage assignments that arise from functions on $S$. Let $\ell$ be a prime number and $\beta: S\rightarrow \Z_\ell$ be a function such that: 
\begin{enumerate}
\item the image of $\beta$ generates $\Z_{\ell}$ (as a $\Z_\ell$-module), 
\item $\beta(s^{-1})=-\beta(s)$ and $\beta(1_G)=0$,
\item the image of $\beta$ lies in $\Z$,
\item there exists $m>0$ and a tuple $(h_1, \dots, h_m)\in S^m$ such that $h_1 h_2 \dots h_m\in S$ and
\begin{equation}\label{congruence equation}
\beta(h_1 h_2 \dots h_m)\not \equiv \sum_{i=1}^m \beta(h_i)\pmod{\ell}.
\end{equation}
\end{enumerate} 
We define a $\Z_\ell$-valued \emph{voltage assignment} $\alpha=\alpha_\beta: E_X^+\rightarrow \Z_\ell$ by $\alpha\left(e\right):=\beta(g_1 g_2^{-1})$ where $e$ is the edge joining $v_{g_1}$ to $v_{g_2}$.
\end{definition} 

\begin{remark}
    The condition (1) above is a consequence of (4). It follows from (2) that $\alpha(\iota(e))=\beta(g_2g_1^{-1})=-\beta(g_1 g_2^{-1})=-\alpha(e)$. Condition (3) simplifies some of our calculations, see for instance in the definition of $m_\beta$ in the formula for $P_\psi(T)$ in \eqref{def of Pchi}. Finally, (4) is used in the Proposition below to establish connectedness of the graphs in towers.
\end{remark}

\begin{proposition}
    With respect to notation above, the graphs $X(\Z/\ell^k, S, \alpha_{/k})$ are connected for all $k\geq 0$ (i.e. the Assumption \ref{main assumption} is satisfied).
\end{proposition}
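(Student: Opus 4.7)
The plan is to invoke Theorem \ref{rho is surjective}, which reduces the problem to showing that for every $k \geq 0$, the group homomorphism $\rho_{\alpha_{/k}} \colon \pi_1(X, v_{1_G}) \to \Z/\ell^k\Z$ is surjective. Since $\Z/\ell^k\Z$ is cyclic, surjectivity will follow once we exhibit a single cycle $c$ based at $v_{1_G}$ with $\alpha(c) \in \Z_\ell^\times$, for then $\alpha(c) \pmod{\ell^k}$ is a generator of $\Z/\ell^k\Z$.

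To produce such a cycle, I would use condition (4) in Definition \ref{def of beta}. Let $(h_1,\dots,h_m) \in S^m$ be the tuple provided, and set $H := h_1 h_2 \cdots h_m \in S$. I construct a walk based at $v_{1_G}$ as follows. Starting at $v_{1_G}$, traverse the edge with value $h_1$ (i.e.\ the edge $e$ with $i(e) = (v_{1_G}, v_{h_1^{-1}})$); this is a genuine edge of $X$ since $h_1 \in S$, and its voltage is $\alpha(e) = \beta(h_1)$. From $v_{h_1^{-1}}$, traverse the edge with value $h_2$ to reach $v_{(h_1h_2)^{-1}}$. Continuing inductively, after $m$ steps we arrive at the vertex $v_{H^{-1}}$, having accumulated total voltage $\sum_{i=1}^m \beta(h_i)$. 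Finally, since $H \in S$, the element $H^{-1} \in S$ as well, so there is an edge from $v_{H^{-1}}$ back to $v_{1_G}$ with value $H^{-1} \cdot 1_G = H^{-1}$, contributing voltage $\beta(H^{-1}) = -\beta(H)$ by condition (2).

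The resulting cycle $c$ based at $v_{1_G}$ has total voltage
\[
\alpha(c) \;=\; \sum_{i=1}^{m} \beta(h_i) \;-\; \beta(h_1 h_2 \cdots h_m),
\]
which by condition (4) satisfies $\alpha(c) \not\equiv 0 \pmod{\ell}$, i.e.\ $\alpha(c) \in \Z_\ell^\times$. Therefore the image of $\rho_{\alpha_{/k}}$ contains a unit of $\Z/\ell^k\Z$, so $\rho_{\alpha_{/k}}$ is surjective and Theorem \ref{rho is surjective} yields the desired connectedness.

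I do not anticipate a serious obstacle here: the only subtlety is bookkeeping the direction conventions (the edge from $v_{g_1}$ to $v_{g_2}$ carries value $g_1 g_2^{-1}$, so translating from the multiplicative sequence $(h_1,\dots,h_m)$ into an actual walk requires following vertices of the form $v_{(h_1\cdots h_j)^{-1}}$). Once this convention is fixed, condition (4) is tailored precisely so that the loop $c$ has voltage prime to $\ell$, which is why it was imposed in Definition \ref{def of beta}.
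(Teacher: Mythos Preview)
Your proposal is correct and follows essentially the same argument as the paper: both invoke Theorem \ref{rho is surjective} and construct a closed walk at $v_{1_G}$ from the tuple $(h_1,\dots,h_m)$ whose total voltage is $\pm\bigl(\sum_i \beta(h_i)-\beta(h_1\cdots h_m)\bigr)$, hence a unit in $\Z_\ell$ by condition (4). The only cosmetic difference is orientation: the paper walks through the vertices $v_{h_1\cdots h_j}$ while you walk through $v_{(h_1\cdots h_j)^{-1}}$, yielding voltages that differ by a sign, which is immaterial.
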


\begin{proof}
    We deduce the result from Theorem \ref{rho is surjective} by showing that the homomorphism $\rho_\alpha^k:\pi_1(X, v_0)\rightarrow \Z/\ell^k \Z$ is surjective for all $k\geq 1$. Fix a tuple $(h_1, \dots, h_m)$ such that
    \[\beta(h_1 h_2 \dots h_m)\not \equiv \sum_{i=1}^m \beta(h_i)\pmod{\ell}.\] We take $v_0$ to be the vertex $v_{1_G}$. Consider the sequence of elements \[a_0:=1_G, a_1:=h_1, a_2:=h_1 h_2, \dots, a_m :=h_1 h_2\dots h_m.\] The sequence of elements gives rise to a loop $\gamma$ starting and ending at $v_0=v_{a_0}$ traversing the vertices $v_{a_1}, \dots, v_{a_m}$. For $i=1, \dots, m$, let $e_i$ be the edge joining $a_{i-1}$ to $a_{i}$ and $e_{m+1}$ be the edge joining $a_m$ back to $a_0$. We find that 
    \[\alpha(e_i)=\begin{cases}
        -\beta(h_i) & \text{ if } i<m+1;\\
        \beta(h_1\dots h_m)& \text{ if } i=m+1.
    \end{cases}\]
    Therefore, $\rho_\alpha^k(\gamma)=-\sum_{i=1}^m \beta(h_i)+\beta(h_1\dots h_m)$ and thus, $\rho_\alpha^k(\gamma)\in \left(\Z/\ell^k \Z\right)^\times$. Therefore, the map $\rho_\alpha^k$ is surjective for all $k\geq 1$.
\end{proof}

    The condition \eqref{congruence equation} can be specialized to a neater condition as follows.
    \begin{proposition}\label{reformation propn}
        Let $\beta: S\rightarrow \Z_\ell$ be a function satisfying the conditions (1)-(3) of Definition \ref{def of beta}. Assume moreover that there exists $h\in S$ with order $M>1$ such that the following conditions are satisfied
        \begin{enumerate}
            \item $(M, \ell)=1$,
            \item $\beta(h)\not\equiv 0\mod{\ell}$.
        \end{enumerate}
        Then, the congruence condition \eqref{congruence equation} is satisfied.
    \end{proposition}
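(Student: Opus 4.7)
The plan is to construct the tuple required by condition (4) of Definition \ref{def of beta} explicitly, by taking every entry equal to $h$. The key observation is that because $h$ has finite order $M$ coprime to $\ell$, powers of $h$ afford considerable flexibility: for any $m$ with $h^m \in S$, the tuple $(h,\dots,h) \in S^m$ is automatically admissible, and the congruence \eqref{congruence equation} reduces to checking that $\beta(h^m) \not\equiv m\beta(h) \pmod{\ell}$.

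Concretely, I would take $m := M+1$ and $h_1 = h_2 = \cdots = h_m = h$. Then $h_1 h_2 \cdots h_m = h^{M+1} = h \in S$, so the admissibility condition holds automatically. The quantity controlling \eqref{congruence equation} becomes
\begin{equation*}
\beta(h_1 h_2 \cdots h_m) - \sum_{i=1}^{m} \beta(h_i) = \beta(h) - (M+1)\beta(h) = -M\beta(h).
\end{equation*}
The hypothesis $(M,\ell) = 1$ makes $M$ a unit modulo $\ell$, and combining this with $\beta(h) \not\equiv 0 \pmod{\ell}$ yields $-M\beta(h) \not\equiv 0 \pmod{\ell}$, which is exactly \eqref{congruence equation}.

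There is no real obstacle here: the argument is essentially a one-line verification once one commits to iterating $h$. A minor design point is the choice of exponent $m$: taking $m = M-1$ instead (so that $h^m = h^{-1} \in S$ via the symmetry $S = S^{-1}$) also works but requires separate attention to the degenerate case $M = 2$, in which $h = h^{-1}$ and the relation $\beta(h) = -\beta(h)$ in the torsion-free ring $\Z_\ell$ forces $\beta(h) = 0$, contradicting the hypothesis. The choice $m = M+1$ sidesteps this case distinction and applies uniformly to all $M > 1$ with $(M,\ell) = 1$.
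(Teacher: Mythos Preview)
Your proof is correct and follows essentially the same strategy as the paper's: take all $h_i$ equal to $h$ and reduce \eqref{congruence equation} to the divisibility condition $\ell \nmid M\beta(h)$. The only cosmetic difference is that the paper uses $m = M-1$ (so the product is $h^{-1}\in S$) rather than your $m = M+1$; both choices yield the same final condition, and your remark about the $M=2$ case is a nice observation but not strictly needed, since in that case the hypothesis $\beta(h)\not\equiv 0\pmod{\ell}$ is vacuous anyway.
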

    \begin{proof}
        Taking $h_1=h_2=\dots=h_{M-1}=h$, the congruence condition becomes 
    \[\beta(h^{M-1})\not \equiv (M-1)\beta(h)\pmod{\ell}.\]
    Note that $h^{M-1}=h^{-1}$ and hence is contained in $S$. The condition becomes $-\beta(h)\not \equiv (M-1)\beta(h)\pmod{\ell}$, i.e., $\ell\nmid M \beta(h)$.
    Since $(M, \ell)=1$, and $\beta(h)\not\equiv 0\pmod{\ell}$, the result follows.
    \end{proof}

\par We choose an ordering and write $G=\{g_1, \dots, g_n\}$ and set $v_i:=v_{g_i}$. For $g\in G$, set \[\delta_S(g):=\begin{cases}
1 &\text{ if }g\in S;\\ 
0 &\text{ if }g\notin S.
\end{cases}\]Recall that a voltage assignment $\alpha: E_X^+\rightarrow \Z_\ell$ gives rise to a $\Z_\ell$-tower over $X$ and that \[f_{X, \alpha}(T)=\op{det}\left(\op{M}_{X, \alpha}(1+T)\right)=\op{det}\left(d_{i,j}- \delta_S(g_i g_j^{-1})(1+T)^{\beta(g_i g_j^{-1})}\right)_{i,j}\] is the associated Iwasawa polynomial.

\par Let $\bar{\Q}$ (resp. $\bar{\Q}_\ell$) be a choice of algebraic closure of $\Q$ (resp $\Q_\ell$). Choose an embedding of $\bar{\Q}$ into $\bar{\Q}_\ell$ and thus view any algebraic number as an element in $\bar{\Q}_\ell$. Let $K$ be the field generated by $|G|$-th roots of unity in $\bar{\Q}_\ell$. Set $\cO$ to denote its valuation ring and $\varpi$ its uniformizer. Thus any character $\psi\in \widehat{G}$ takes values in $\cO^\times$. Take $k$ to denote the residue field $\cO/(\varpi)$ and set $\bar{\psi}:G\rightarrow k^\times$ to denote the mod-$\varpi$ reduction of $\psi$. Recall that $r:=\# S$. For $\psi \in \widehat{G}$, set 
    \begin{equation}\label{def of Pchi}P_\psi(T)=P_{X, \alpha, \psi}(T):=(1+T)^{m_\beta}r-\sum_{s\in S} (1+T)^{m_\beta-\beta(s)} \psi(s)\in \cO[T],\end{equation} where $m_\beta:=\op{max}\left\{\beta(s)\mid s\in S\right\}$. Given elements $f, g\in \cO\llbracket T\rrbracket$, write $f\sim g$ to mean that $f=ug$ for some unit $u$ of $\cO\llbracket T\rrbracket$. 
\begin{theorem}\label{determinant calculation}
    Let $G$ be a finite abelian group and $S$ a subset of $G$ such that \begin{itemize}
    \item $S$ generates $G$,
    \item $S=S^{-1}$,
    \item $1\notin S$.
\end{itemize} Let $X=\op{Cay}(G, S)$ be the associated Cayley graph. Let $\beta: S\rightarrow \Z_\ell$ be a function satisfying the conditions (1)--(4) of Definition \ref{def of beta} and \[\alpha=\alpha_\beta:E_X^+\rightarrow \Z_\ell\] be the associated $\Z_\ell$-valued voltage assignment. Then, we have that \[f_{X, \alpha}(T)\sim \prod_{\psi\in \widehat{G}} P_\psi(T).\]
\end{theorem}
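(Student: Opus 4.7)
The plan is to view $M_{X,\alpha}(1+T)$ as an operator on the free $\cO\llbracket T\rrbracket$-module of functions on $G$, and to diagonalize it simultaneously using the characters in $\widehat{G}$. For the Cayley graph $X=\op{Cay}(G,S)$, every vertex has degree $r=\# S$, so the $(v_{g_i},v_{g_j})$-entry of $M_{X,\alpha}(1+T)$ is $r\,\delta_{i,j}-\delta_S(g_ig_j^{-1})(1+T)^{\beta(g_ig_j^{-1})}$. Identifying the standard basis with $G$, this matrix acts on $f\colon G\to \cO\llbracket T\rrbracket$ by
\[(Mf)(g)=r\,f(g)-\sum_{s\in S}(1+T)^{\beta(s)}\,f(s^{-1}g),\]
so that $M_{X,\alpha}(1+T)=rI-\sum_{s\in S}(1+T)^{\beta(s)}L_{s}$, where $L_s$ denotes left translation by $s$ in the regular representation.

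Next, since $G$ is abelian and $K$ contains all $|G|$-th roots of unity, the characters $\psi\in\widehat G$ form a $K$-basis of the space of $K$-valued functions on $G$ and simultaneously diagonalize the regular representation, via $L_s\psi=\psi(s^{-1})\,\psi$. Each $\psi$ is therefore an eigenvector of $M_{X,\alpha}(1+T)$ with eigenvalue
\[\lambda_\psi(T):=r-\sum_{s\in S}(1+T)^{\beta(s)}\psi(s^{-1}),\]
and after base change to $K\llbracket T\rrbracket$ we conclude $f_{X,\alpha}(T)=\det M_{X,\alpha}(1+T)=\prod_{\psi\in\widehat G}\lambda_\psi(T)$.

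Finally, I would rewrite each $\lambda_\psi(T)$ in the form of \eqref{def of Pchi}. Using $S=S^{-1}$ together with $\beta(s^{-1})=-\beta(s)$, the substitution $t=s^{-1}$ gives $\sum_{s\in S}(1+T)^{\beta(s)}\psi(s^{-1})=\sum_{t\in S}(1+T)^{-\beta(t)}\psi(t)$. Multiplying $\lambda_\psi(T)$ by $(1+T)^{m_\beta}$ — an integer power of $(1+T)$ by Definition \ref{def of beta}(3), hence a unit in $\cO\llbracket T\rrbracket$ — converts it into the polynomial $P_\psi(T)\in\cO[T]$, all exponents $m_\beta-\beta(s)$ being non-negative by the choice of $m_\beta$. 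Taking the product over $\psi\in\widehat G$ yields $\prod_{\psi}P_\psi(T)=(1+T)^{|G|m_\beta}\,f_{X,\alpha}(T)$, which is precisely the asserted relation $f_{X,\alpha}(T)\sim\prod_{\psi}P_\psi(T)$.

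There is no substantive obstacle beyond bookkeeping: the only non-routine point is that the diagonalization takes place over $K\llbracket T\rrbracket$ rather than $\cO\llbracket T\rrbracket$, which is harmless because both sides of the identity live in $\cO\llbracket T\rrbracket$ and an equality in the overring forces equality in the subring. Morally, the theorem is just Fourier analysis on the finite abelian group $G$, applied to a matrix that happens to lie in the image of the regular representation of $\cO\llbracket T\rrbracket[G]$.
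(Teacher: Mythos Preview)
Your proposal is correct and follows essentially the same approach as the paper: both arguments diagonalize $M_{X,\alpha}(1+T)$ via the characters of $G$, obtain the eigenvalues $r-\sum_{s\in S}(1+T)^{\beta(s)}\psi(s^{-1})$, and then clear the negative powers of $(1+T)$ by multiplying by $(1+T)^{m_\beta}$. The only cosmetic difference is that you phrase the diagonalization in the language of the regular representation and left-translation operators $L_s$, whereas the paper writes down the explicit eigenvectors $v_\psi=(\psi(g_1),\dots,\psi(g_n))$ and the change-of-basis matrix $\mathbf{F}=(\psi_j(g_i))_{i,j}$ directly; the substance is identical.
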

\begin{proof}
    For $\psi\in \widehat{G}$, set $v_\psi=(\psi(g_1), \dots, \psi(g_n))$ observe that 
\[\left(M_{X,\alpha}(x) v_\psi\right)_i= r \psi(g_i)-\sum_{j; g_i g_j^{-1}\in S} x^{\beta(g_i g_j^{-1})} \psi(g_j)=r \psi(g_i)-\sum_{s\in S} x^{\beta(s)} \psi(g_i s^{-1}).\]
Therefore, 
\begin{equation}\label{matrix relation}M_{X,\alpha}(x) v_\psi= \left(r-\sum_{s\in S} x^{\beta(s)} \psi(s^{-1})\right)v_\psi.\end{equation}
Choose an enumeration of $\widehat{G}=\{\psi_1, \dots, \psi_j, \dots, \psi_n\}$, and let $\mathbf{F}:=\left(\psi_j(g_i)\right)_{i,j}$. It follows from the orthogonality relations that $\mathbf{F}$ is invertible with $\mathbf{F}^{-1}=\left(\frac{1}{n}\bar{\psi}_i(g_j)\right)_{i,j}$. The relation \eqref{matrix relation} implies that $v_{\psi_1}, \dots, v_{\psi_n}$ is a basis of eigenvectors and $\mathbf{F}$ is the change of basis matrix. Thus, \eqref{matrix relation} can be rephrased as follows
\[M_{X,\alpha}(x)= \mathbf{F}B_{X,\alpha}(x) \mathbf{F}^{-1},\] where 
\[B_{X, \alpha}(x)=\left(b_{i,j}(x)\right),\] where 
\[b_{i,j}(x)=\begin{cases}
    0 &\text{ if } i\neq j;\\
    \left(r-\sum_{s\in S} x^{\beta(s)} \psi_i(s^{-1})\right) &\text{ if }i=j.\\
\end{cases}\]
Therefore, 
\[\begin{split}f_{X, \alpha}(T)=& \op{det}\left(M_{X,\alpha}(1+T)\right) \\
= & \op{det}\left(D_{X, \alpha}(1+T)\right) \\
=& \prod_{\psi\in \widehat{G}} \left(r-\sum_{s\in S} (1+T)^{-\beta(s)} \psi(s)\right), \\
=& (1+T)^{-m_\beta n} \prod_{\psi\in \widehat{G}}P_\psi(T);\end{split}\]
since $(1+T)$ is a unit in $\Z_\ell\llbracket T\rrbracket$, the result follows.
\end{proof}

\begin{remark}Let $\beta, \beta': S\rightarrow \Z_\ell$ be functions satisfying the conditions of Definition \ref{def of beta}. Suppose that there is a constant $c\in \Z$ such that $\ell\nmid c$ and $\beta(s)=c \beta'(s)$ for all $s\in S$. Let $\alpha:=\alpha_\beta$ and $\alpha':=\alpha_{\beta'}$ be the associated $\Z_\ell$-valued functions. Then, it is easy to see that \[f_{X, \alpha'}(T)=f_{X,\alpha}\left((1+T)^c-1\right).\]
Note that $T\sim (1+T)^c-1$ and thus it follows from an application of the $\ell$-adic Weierstrass preparation theorem that 
\[\mu_\ell(X, \alpha)=\mu_\ell(X, \alpha')\text{ and }\lambda_\ell(X, \alpha)=\lambda_\ell(X, \alpha').\]
Thus, as far as the computation of Iwasawa invariants is concerned, we may as well assume that there are no common divisors of the values $\{\beta(s)\mid s\in S\}$ (other than $\pm 1$).
\end{remark}

\subsection{The Iwasawa invariants $\mu_\psi$ and $\lambda_\psi$ and their properties}
Write $P_\psi(T)=\varpi^{\mu_\psi} Q_\psi(T) u_\psi(T)$, where $Q_\psi(T)$ is a distinguished polynomial and $u_\psi(T)\in \cO\llbracket T\rrbracket^\times$. Set $\lambda_\psi$ to denote the $\lambda$-invariant of $P_\psi(T)$, defined as follows $\lambda_\psi:=\op{deg}\left(Q_\psi(T)\right)$. Let $e$ denote the ramification index, defined by $(\ell)=(\varpi^e)$ in $\cO$. Note that $P_{1}(0)=r-\sum_{s\in S} 1(s)=r-r=0$, and thus, $T$ divides $P_1(T)$. In particular, $\lambda_1\geq 1$.

\begin{theorem}\label{sum of Iwasawa invariants theorem}
    With respect to notation above, 
    \[\begin{split}
        &\mu_\ell(X, \alpha)=\frac{1}{e}\left(\sum_{\psi\in \widehat{G}} \mu_\psi\right);\\
        &\lambda_\ell(X, \alpha)=\left(\sum_{\psi\in \widehat{G}} \lambda_\psi\right)-1.
    \end{split}\]
    In particular, if $\mu_\ell(X, \alpha)=0$ if and only if $\mu_\psi=0$ for all $\psi\in \widehat{G}$. 
\end{theorem}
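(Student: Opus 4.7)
The plan is to compare two Weierstrass decompositions of $f_{X,\alpha}(T)$: one over $\Z_\ell\llbracket T\rrbracket$, where the invariants $\mu_\ell(X,\alpha)$ and $\lambda_\ell(X,\alpha)$ are defined, and one over the larger complete DVR $\cO\llbracket T\rrbracket$, where the factorization from Theorem \ref{determinant calculation} naturally lives. I would first note that $f_{X,\alpha}(T)$ is a nonzero element of $\Z_\ell\llbracket T\rrbracket$: by the specialization formula \eqref{spe_pow1} combined with the nonvanishing remark following Corollary \ref{class number relation corollary}, one has $f_{X,\alpha}(1-\zeta_\ell) = h_X(1,\psi_1) \neq 0$. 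Consequently each factor $P_\psi(T)$ appearing in Theorem \ref{determinant calculation} is nonzero, so Weierstrass preparation applies to each of them.

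Applying Weierstrass preparation in $\cO\llbracket T\rrbracket$ yields $P_\psi(T) = \varpi^{\mu_\psi}Q_\psi(T)u_\psi(T)$ for every $\psi$. A finite product of distinguished polynomials in $\cO[T]$ is again distinguished — the leading coefficient is $1$, and every non-leading coefficient of the product must involve at least one non-leading coefficient from some factor, hence lies in $(\varpi)$. Combined with Theorem \ref{determinant calculation}, this gives the $\varpi$-adic Weierstrass decomposition
\[
f_{X,\alpha}(T) = \varpi^{\sum_\psi \mu_\psi}\Big(\prod_{\psi\in\widehat{G}} Q_\psi(T)\Big) \cdot (\text{unit in }\cO\llbracket T\rrbracket),
\]
from which I can read off $\mu_\varpi(f_{X,\alpha}) = \sum_\psi \mu_\psi$ and $\lambda_\varpi(f_{X,\alpha}) = \sum_\psi \lambda_\psi$, where $\mu_\varpi$ and $\lambda_\varpi$ denote the $\varpi$-adic Iwasawa invariants of $f_{X,\alpha}$ viewed inside $\cO\llbracket T\rrbracket$.

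Next I would relate these $\varpi$-adic invariants to the $\ell$-adic ones. By definition $f_{X,\alpha}(T) = (1+T)^{-m} T\, g_{X,\alpha}(T)$, and the $\ell$-adic Weierstrass factorization $g_{X,\alpha}(T) = \ell^{\mu_\ell(X,\alpha)}P(T)u(T)$ holds with $P$ distinguished of degree $\lambda_\ell(X,\alpha)$. Viewing this identity inside $\cO\llbracket T\rrbracket$, rewrite $\ell = \varpi^e\cdot(\text{unit})$; note that $(1+T)^{-m}$ is a unit, $T$ is distinguished of degree $1$, and $P$ remains distinguished over $\cO$. The resulting $\varpi$-adic Weierstrass decomposition of $f_{X,\alpha}$ therefore has $\varpi$-exponent equal to $e\,\mu_\ell(X,\alpha)$ and distinguished part $T\cdot P(T)$ of degree $1+\lambda_\ell(X,\alpha)$. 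This yields $\mu_\varpi(f_{X,\alpha}) = e\,\mu_\ell(X,\alpha)$ and $\lambda_\varpi(f_{X,\alpha}) = 1+\lambda_\ell(X,\alpha)$. Comparing with the previous paragraph produces both claimed formulas, and the final ``if and only if'' assertion then follows because each $\mu_\psi$ is a non-negative integer, so their sum vanishes exactly when every summand does.

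The main obstacle I anticipate is the bookkeeping in the last paragraph, where one must simultaneously account for two distinct adjustments: the change of base from $\Z_\ell$ to $\cO$ (responsible for the factor $1/e$ in the $\mu$-formula) and the removal of the extra factor of $T$ distinguishing $f_{X,\alpha}$ from $g_{X,\alpha}$ (responsible for the $-1$ in the $\lambda$-formula). The remainder of the argument is a mechanical application of the Weierstrass preparation theorem together with the fact that a product of distinguished polynomials is distinguished.
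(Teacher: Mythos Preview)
Your proposal is correct and follows essentially the same approach as the paper: use Theorem \ref{determinant calculation} to factor $f_{X,\alpha}(T)$ over $\cO\llbracket T\rrbracket$, apply Weierstrass preparation to each factor, and compare with the $\ell$-adic decomposition $f_{X,\alpha}(T)\sim T\,g_{X,\alpha}(T)=T\cdot\ell^{\mu_\ell(X,\alpha)}P(T)u(T)$ viewed inside $\cO\llbracket T\rrbracket$. The paper's argument is considerably terser and omits the justifications you supply (nonvanishing of $f_{X,\alpha}$, closure of distinguished polynomials under products, the $e$-rescaling when passing from $\ell$ to $\varpi$), but the underlying logic is identical.
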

\begin{proof}
    By Theorem \ref{determinant calculation}, \[T g_{X, \alpha}(T)=f_{X, \alpha}(T)\sim \prod_{\psi\in \widehat{G}} P_\psi(T),\] and therefore 
    \[Tg_{X, \alpha}(T)\sim \varpi^{\sum_{\psi\in \widehat{G}} \mu_\psi} \prod_{\psi\in \widehat{G}} Q_\psi(T). \]
    Therefore, 
    \[\begin{split} & \sum_\psi \mu_\psi=e \mu_\ell(X,\alpha), \\ 
    & \sum_{\psi} \lambda_\psi=1+\lambda_\ell(X,\alpha),
    \end{split}\]
    from which the result follows.
\end{proof}

For each $j\in [0, 2 m_\beta]$, let \[S_j:=\beta^{-1}\{(m_\beta-j)\}=\{s\in S\mid \beta(s)=m_\beta-j\}.\] From \eqref{def of Pchi}, we deduce that 
\[P_\psi(T)=\sum_{j=0}^{2 m_\beta} a_{j, \psi} (1+T)^j,\]
where \begin{equation}\label{formula for the ajs}a_{j, \psi}:=\begin{cases}
     -\sum_{s\in S_j} \psi(s) &\text{ if }j\neq m_\beta;\\
     r-\sum_{s\in S_j} \psi(s) &\text{ if }j=m_\beta.
\end{cases}\end{equation}
Also note that $P_\psi(0)=r-\sum_{s\in S} \psi(s)$. Recall that $\bar{\psi}$ is the mod-$\varpi$ reduction of $\psi$. Set $\bar{r}$ to denote the mod-$\ell$ reduction of $r$.

\begin{lemma}\label{basic lemma on mu and lambda vanishing}
    Let $\psi$ be a nontrivial character of $G$, then the following are equivalent
    \begin{enumerate}
        \item $\mu_\psi=0$ and $\lambda_\psi=0$,
        \item $P_\psi(T)$ is a unit in $\cO\llbracket T\rrbracket$,
        \item $\sum_{s\in S} \bar{\psi}(s)\neq \bar{r}$.
    \end{enumerate}
\end{lemma}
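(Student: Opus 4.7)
The plan is to prove the equivalence of the three statements via the chain $(1) \Leftrightarrow (2) \Leftrightarrow (3)$, both implications reducing to standard facts about power series over the complete discrete valuation ring $\cO$.

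For $(1) \Leftrightarrow (2)$, I would start from the Weierstrass factorization $P_\psi(T) = \varpi^{\mu_\psi} Q_\psi(T) u_\psi(T)$ written down immediately before the lemma. If $\mu_\psi = \lambda_\psi = 0$, then $Q_\psi$ is a monic distinguished polynomial of degree zero, hence $Q_\psi = 1$, so $P_\psi = u_\psi$ is a unit. Conversely, if $P_\psi \in \cO\llbracket T\rrbracket^\times$, then any positive power of $\varpi$ would put $P_\psi$ in the maximal ideal, and any distinguished polynomial of positive degree has constant term in $(\varpi)$ and so cannot be a unit; thus $\mu_\psi = \lambda_\psi = 0$. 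This uses nothing beyond the uniqueness in the Weierstrass preparation theorem.

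For $(2) \Leftrightarrow (3)$, I would invoke the standard criterion that $f(T) \in \cO\llbracket T\rrbracket$ is a unit if and only if $f(0) \in \cO^\times$, equivalently $f(0) \not\equiv 0 \pmod{\varpi}$. Evaluating the defining expression \eqref{def of Pchi} at $T = 0$ collapses every factor $(1+T)^{\bullet}$ to $1$, yielding
\[
P_\psi(0) \;=\; r - \sum_{s \in S} \psi(s).
\]
Since $r \in \Z$ and $k$ contains $\F_\ell$, the mod-$\varpi$ reduction of $r$ coincides with $\bar r$, and similarly each $\psi(s)$ reduces to $\bar\psi(s)$. Therefore $P_\psi(0)$ is a unit in $\cO$ precisely when $\bar r - \sum_{s \in S} \bar\psi(s) \neq 0$ in $k$, which is exactly condition (3).

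The argument presents no real obstacle; the only subtlety worth flagging is the bookkeeping identification of the mod-$\varpi$ reduction of the integer $r$ with $\bar r \in \F_\ell \subseteq k$. It is also worth observing that the hypothesis $\psi \neq 1$ is not strictly needed for the three-way equivalence itself, but it is natural in context: for $\psi = 1$ one has $P_1(0) = r - r = 0$, so (2) and (3) both fail and the lemma is compatible with the earlier observation that $T \mid P_1(T)$ and hence $\lambda_1 \geq 1$.
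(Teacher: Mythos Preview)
Your proposal is correct and follows essentially the same route as the paper: both equivalences are reduced to the Weierstrass preparation theorem and the computation $P_\psi(0)=r-\sum_{s\in S}\psi(s)$, with (2)$\Leftrightarrow$(3) following from the unit criterion for power series over $\cO$. Your write-up simply spells out in more detail what the paper compresses into two sentences, and your closing remark that the hypothesis $\psi\neq 1$ is not used in the equivalence itself (but only serves to place the lemma in context with the case $P_1(0)=0$) is accurate.
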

\begin{proof}
    The equivalence of the conditions (1) and (2) is an easy consequence of the $\ell$-adic Weierstrass preparation theorem. The power series $P_\psi(T)$ is a unit if and only if its constant term $P_\psi(0)$ is a unit in $\cO$. Since $P_\psi(0)=r-\sum_{s\in S} \psi(s)$, it follows that (2) and (3) are equivalent.
\end{proof}

\begin{lemma}\label{trivial chi case lemma}
    With respect to notation above, $\lambda_1\geq 2$. Moreover, the following conditions are equivalent
    \begin{enumerate}
        \item $\mu_1=0$ and $\lambda_1=2$, 
        \item $P_1''(T)$ is a unit in $\cO\llbracket T\rrbracket$,
        \item $\sum_{s\in S} \bar{\beta}(s)^2\neq 0$.
    \end{enumerate}
\end{lemma}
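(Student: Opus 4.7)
The plan is to compute the first three Taylor coefficients of $P_1(T)$ at $T = 0$ and then translate the three conditions via the $\ell$-adic Weierstrass preparation theorem. For the bound $\lambda_1 \geq 2$, I would evaluate $P_1(T) = r(1+T)^{m_\beta} - \sum_{s \in S}(1+T)^{m_\beta - \beta(s)}$ and its first derivative at $T = 0$. Immediately $P_1(0) = r - |S| = 0$. Differentiating, $P_1'(0) = r m_\beta - \sum_{s \in S}(m_\beta - \beta(s)) = \sum_{s \in S} \beta(s)$, which vanishes because the involution $s \mapsto s^{-1}$ on $S$ together with $\beta(s^{-1}) = -\beta(s)$ pairs up values of $\beta$ with their negatives. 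Hence $T^2 \mid P_1(T)$, giving $\lambda_1 \geq 2$.

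The crux is the second-derivative computation. Expanding $(m_\beta - \beta(s))(m_\beta - \beta(s) - 1) = m_\beta^2 - m_\beta - (2m_\beta - 1)\beta(s) + \beta(s)^2$ and summing over $s \in S$, the $m_\beta^2$, $m_\beta$, and linear-in-$\beta(s)$ contributions cancel against $r m_\beta(m_\beta - 1)$ via $\sum_s \beta(s) = 0$ and $|S| = r$, leaving $P_1''(0) = -\sum_{s \in S} \beta(s)^2$. The equivalence (2) $\Leftrightarrow$ (3) is then immediate: $P_1''(T)$ is a unit in $\cO\llbracket T\rrbracket$ iff its constant term $P_1''(0) \in \Z$ is a unit in $\cO$, equivalently iff $\ell \nmid \sum_s \beta(s)^2$, i.e.\ $\sum_s \bar\beta(s)^2 \neq 0$ in $\F_\ell$.

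For (1) $\Leftrightarrow$ (2), I would factor $P_1(T) = T^2 R(T)$ with $R \in \cO[T]$, noting $R(0) = P_1''(0)/2$. Weierstrass preparation then shows that $\mu_1 = 0$ and $\lambda_1 = 2$ precisely when $R(T)$ is a unit in $\cO\llbracket T\rrbracket$, which happens iff $R(0)$ is a unit in $\cO$, and (for $\ell$ odd, where $2$ is invertible in $\cO$) iff $P_1''(0)$ itself is a unit. The only substantive calculation is the second-derivative evaluation; the main thing to be careful about is the sign bookkeeping during cancellation. Everything else follows from the symmetry relations on $(S, \beta)$ and a routine application of Weierstrass preparation.
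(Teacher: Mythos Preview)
Your approach is exactly the paper's: evaluate $P_1(0)$, $P_1'(0)$, $P_1''(0)$ using the symmetry $\beta(s^{-1})=-\beta(s)$, then invoke Weierstrass preparation. Two small remarks. First, your value $P_1''(0)=-\sum_{s\in S}\beta(s)^2$ is the correct one; the paper obtains $+\sum_{s}\beta(s)^2$ via a sign slip in the final line of the expansion, but this is immaterial since $-1\in\cO^\times$. Second, your parenthetical restriction to $\ell$ odd in the step $(1)\Leftrightarrow(2)$ is genuinely needed and is sharper than the paper's own argument, which simply declares the equivalence an ``immediate consequence'' of Weierstrass preparation; the Taylor coefficient governing $(1)$ is $R(0)=P_1''(0)/2$, so for $\ell=2$ the unit status of $P_1''(0)$ and of $R(0)$ can differ (and one can build examples, e.g.\ $G=\Z/5\Z$, $S=G\setminus\{0\}$, $\beta(1)=1,\beta(2)=2$, where $(1)$ holds but $(2),(3)$ fail).
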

\begin{proof}
    Note that $P_1(0)=0$ and hence $P_1(T)$ is divisible by $T$. Recall that from \eqref{def of Pchi}, we have that 
    \[P_1(T)=(1+T)^{m_\beta}r-\sum_{s\in S} (1+T)^{m_\beta-\beta(s)}\]
    and thus, 
    \[P_1'(0)=m_\beta r-\sum_{s\in S} (m_\beta-\beta(s))=\sum_{s \in S} \beta(s).\]
    Since $\beta(s^{-1})=-\beta(s)$ and $S=S^{-1}$, it follows that $P_1'(0)=0$. Hence, $T^2$ divides $P_1(T)$ and $\lambda_1\geq 2$.
    \par We find that 
    \[\begin{split}P_1''(0)=& m_\beta (m_\beta-1) r-\sum_{s\in S} (m_\beta-\beta(s))(m_\beta-\beta(s)-1)\\ =&  m_\beta (m_\beta-1) r-\sum_{s\in S} \left(m_\beta(m_\beta-1)-2\beta(s) m_\beta +\beta(s)^2+\beta(s)\right)\\
    = & (1-2m_\beta)\sum_{s\in S} \beta(s)+\sum_{s\in S}\beta(s)^2=\sum_{s\in S}\beta(s)^2.
    \end{split}\]
    It is an immediate consequence of the $\ell$-adic Weierstrass preparation theorem that (1) and (2) are equivalent. We find that $P_1''(T)$ is a unit in $\cO\llbracket T\rrbracket$ if and only if $P_1''(0)$ is not divisible by $\ell$. On the other hand, note that $P_1''(0)=\sum_{s\in S}\beta(s)^2$ and therefore, (2) is equivalent to (3).
\end{proof}

\begin{proposition}\label{mu vanishing prop}
    Let $\beta: S\rightarrow \Z_\ell$ be a function satisfying the conditions of Definition \ref{def of beta}. Moreover, assume that there exists $j\neq 0$ such that $S_j$ is a singleton. Then, the following assertions hold
    \begin{enumerate}
        \item $\mu_\psi=0$ for all $\psi\in \widehat{G}$,
        \item $\mu(X, \alpha)=0$. 
    \end{enumerate}
\end{proposition}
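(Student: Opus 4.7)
The plan is to deduce (2) from (1) and then to establish (1) by exhibiting a unit coefficient of $P_\psi(T)$ for every character $\psi \in \widehat{G}$. For the first reduction, Theorem \ref{sum of Iwasawa invariants theorem} gives $\mu_\ell(X,\alpha) = e^{-1}\sum_{\psi\in \widehat{G}}\mu_\psi$; since each $\mu_\psi\geq 0$, the vanishing of every $\mu_\psi$ forces $\mu_\ell(X,\alpha) = 0$. The remaining task is therefore to show $\mu_\psi = 0$ for each $\psi \in \widehat{G}$.

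Fix $\psi$. By definition, $\mu_\psi = 0$ is equivalent to $P_\psi(T)$ not being divisible by the uniformizer $\varpi$ in $\cO\llbracket T\rrbracket$, or equivalently (since $P_\psi(T)$ is a polynomial) to the reduction $\bar P_\psi(T) \in k[T]$ being nonzero. Using the $(1+T)$-expansion
\[ P_\psi(T) = \sum_{j=0}^{2m_\beta} a_{j,\psi}(1+T)^j \]
supplied by \eqref{formula for the ajs}, and the fact that $\{(1+T)^j\}_{j=0}^{2m_\beta}$ is $\cO$-linearly independent in $\cO[T]$ (via the invertible change of variables $U = 1+T$, this family becomes the monomial basis $\{U^j\}$), this independence descends modulo $\varpi$. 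Consequently, it suffices to exhibit a single index $j$ for which $a_{j,\psi}$ is a unit in $\cO$.

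The singleton hypothesis furnishes $j_0 \neq 0$ with $S_{j_0} = \{s_0\}$. Provided that $j_0 \neq m_\beta$, formula \eqref{formula for the ajs} yields $a_{j_0,\psi} = -\psi(s_0)$, which is a $|G|$-th root of unity lying in $\cO^\times$, and we are done. To arrange $j_0 \neq m_\beta$, one invokes the inversion symmetry $s \mapsto s^{-1}$: by property (2) of $\beta$, together with $S = S^{-1}$, this bijects $S_j$ with $S_{2m_\beta - j}$, so any singleton at $j_0 \neq m_\beta$ automatically produces a partner singleton at $2m_\beta - j_0 \neq m_\beta$. The principal technical obstacle is the degenerate situation in which the only singleton occurs at $j_0 = m_\beta$: here $\beta(s_0) = 0$ and $s_0^2 = 1$, so the coefficient becomes $a_{m_\beta,\psi} = r - \psi(s_0)$ with $\psi(s_0) = \pm 1$, which can fail to be a unit when $r \equiv \pm 1 \pmod{\varpi}$. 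To complete the argument in that case, one would either show that assumption (4) on $\beta$ precludes this configuration (since a lone singleton at $j_0 = m_\beta$ together with symmetric multiplicities elsewhere tends to force $\bar\beta$ to extend to a homomorphism $G \to \Z/\ell\Z$, contradicting \eqref{congruence equation}), or else locate a unit coefficient $a_{j',\psi}$ at some other index $j'$ by exploiting the pairing structure of $S$ under inversion.
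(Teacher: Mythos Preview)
Your core approach is exactly the paper's: use the singleton hypothesis to force some coefficient $a_{j_0,\psi}$ in the $(1+T)$-expansion of $P_\psi(T)$ to be a unit, so that the mod-$\varpi$ reduction of $P_\psi(T)$ is nonzero and $\mu_\psi = 0$. The paper does this in two lines, writing $\overline{a_{j,\psi}} = \bar\psi(s)$ for $S_j = \{s\}$ and noting that a root of unity is nonzero.

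You have actually been more careful than the paper. The formula $a_{j,\psi} = -\psi(s)$ from \eqref{formula for the ajs} is only valid when $j \neq m_\beta$; at $j = m_\beta$ one has $a_{m_\beta,\psi} = r - \psi(s)$, which need not be a unit. The paper's proof silently uses the $j\neq m_\beta$ formula and never addresses this case.

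Neither of your proposed workarounds rescues the statement. The inversion symmetry sends a singleton at $j_0$ to one at $2m_\beta - j_0$, which coincides with $j_0$ precisely when $j_0 = m_\beta$; and condition (4) does not exclude the bad configuration. Concretely, take $G = \Z/14\Z$, $S = \{1,2,3,7,11,12,13\}$, $\ell = 3$, and set $\beta$ equal to $1$ on $\{1,2,3\}$, to $0$ on $\{7\}$, and to $-1$ on $\{11,12,13\}$. Then $m_\beta = 1$, the unique singleton is $S_1 = \{7\}$ at $j_0 = 1 = m_\beta \neq 0$, and condition (4) holds (with $h_1 = h_2 = 1$, since $\beta(2) = 1 \not\equiv 2 \pmod 3$). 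But for the trivial character,
\[
P_1(T) \;=\; 7(1+T) - 3 - (1+T) - 3(1+T)^2 \;=\; -3T^2,
\]
so $\mu_1 = 1 > 0$.

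Thus the gap you flagged is real: as written (with the hypothesis $j \neq 0$), the proposition is false, and no completion of your sketch in the degenerate case can succeed. The intended hypothesis is almost certainly $j \neq m_\beta$, i.e., there is some $s \in S$ with $\beta(s) \neq 0$ that is the unique element of its $\beta$-fiber. Under that corrected hypothesis, both your argument and the paper's go through immediately.
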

\begin{proof}
    It follows from Theorem \ref{sum of Iwasawa invariants theorem} that the assertions (1) and (2) are equivalent. Thus, it suffices to show that $\mu_\psi=0$ for all $\psi\in \widehat{G}$. From \eqref{formula for the ajs}, we find that 
    \[\widebar{a_{j, \psi}}=\bar{\psi}(s),\] where $S_j=\{s\}$. Since $\bar{\psi}(s)$ is a root of unity, it follows therefore that $\widebar{a_{j, \psi}}\neq 0$. This implies that the mod-$\varpi$ reduction of $P_\psi(T)$ is nontrivial, which in turn implies that $\mu_\psi=0$. 
\end{proof}

\subsection{Complete graphs}

\par In this section, we let $n$ be a positive integer and $K_n$ be the complete graph on $n$ vertices. Let $C_n$ be the cyclic group of order $n$ and and $C_n':=C_n\setminus \{1\}$. Then, we find that $K_n=\rm{Cay}(C_n, C_n')$, we apply the results of the previous subsection to study the $\mu$- and $\lambda$-invariants of $K_n$. Let $\beta: C_n'\rightarrow \Z_\ell$ be a function satisfying the conditions of Definition \ref{def of beta}. Let $\mu_n$ and $\lambda_n$ be the Iwasawa invariants associated to $(K_n, \alpha)$. Let $\mu_{n, \psi}$ and $\lambda_{n, \psi}$ be the Iwasawa invariants $\mu_\psi$ and $\lambda_\psi$ introduced in the previous section. 

\begin{lemma}\label{mu n chi and lambda n chi are 0}
Assume that $\ell\nmid n$. Then, for all nontrivial characters $\psi$, 
\[\mu_{n, \psi}=\lambda_{n, \psi}=0.\]
\end{lemma}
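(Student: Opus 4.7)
The plan is to invoke Lemma~\ref{basic lemma on mu and lambda vanishing} as the main tool, which reduces the joint vanishing $\mu_{n,\psi} = \lambda_{n,\psi} = 0$ (for nontrivial $\psi$) to verifying the single numerical inequality
\[
\sum_{s \in S} \bar\psi(s) \neq \bar{r}
\]
in the residue field $k = \cO/(\varpi)$, where in the present setting $S = C_n'$ and $r = \# S = n - 1$.

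The main input is the standard orthogonality relation for characters. Since $\psi$ is assumed nontrivial on $C_n$, I would write $\sum_{g \in C_n} \psi(g) = 0$ and isolate the contribution from the identity element to obtain
\[
\sum_{s \in C_n'} \psi(s) \;=\; -\psi(1_{C_n}) \;=\; -1
\]
as an identity in $\cO$. Note that this computation takes place integrally, since by construction $\cO$ contains the $|G|$-th roots of unity and thus all values $\psi(s)$ lie in $\cO^\times$.

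Reducing modulo the uniformizer $\varpi$, the criterion to verify becomes $-1 \neq \overline{n-1}$ in $k$, equivalently $\bar n \neq 0$. Because $k$ is a finite extension of the prime field of characteristic $\ell$, this condition is precisely the standing hypothesis $\ell \nmid n$. An application of Lemma~\ref{basic lemma on mu and lambda vanishing} then finishes the proof.

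I do not anticipate any genuine obstacle here: the entire argument rests on a single orthogonality computation, and all other ingredients are bookkeeping. The only subtleties worth being careful about are (i) that the character values genuinely land in $\cO$ so that reduction $\bmod\,\varpi$ is meaningful, and (ii) distinguishing the role of $r = n-1$ (the cardinality of the generating set $S$) from $n$ (the order of the group), so that the final inequality collapses to the stated divisibility condition rather than something off by one.
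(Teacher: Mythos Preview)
Your proposal is correct and follows essentially the same approach as the paper: both invoke Lemma~\ref{basic lemma on mu and lambda vanishing}, compute $\sum_{s\in C_n'}\psi(s)=-1$ via orthogonality for the nontrivial character $\psi$, and reduce the criterion $\bar r\neq -1$ to $\ell\nmid n$ using $r=n-1$. The paper's proof is terser but the logic is identical.
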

\begin{proof}
    Recall that Lemma \ref{basic lemma on mu and lambda vanishing} asserts that \[\mu_{n, \psi}=\lambda_{n, \psi}=0\] if 
    \[\bar{r}\neq \sum_{s\in C_n'} \bar{\psi}(s).\] Since $\psi$ is nontrivial, $\sum_{s\in C_n'} \bar{\psi}(s)=-1$ and $\bar{r}=\widebar{(n-1)}$. Since $\ell\nmid n$, the result follows. 
\end{proof}
\begin{theorem}\label{last thm}
   Suppose that $\ell\nmid n$ and $\sum_{s\in C_n'} \bar{\beta}(s)^2\neq 0$. Then, $\mu_n=0$ and $\lambda_n=1$.  
\end{theorem}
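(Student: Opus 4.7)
The plan is to reduce the statement to a direct application of the character-by-character decomposition established earlier. By Theorem \ref{sum of Iwasawa invariants theorem}, we have
\[
\mu_n \;=\; \frac{1}{e}\sum_{\psi\in\widehat{C_n}} \mu_{n,\psi}, \qquad \lambda_n \;=\; \Bigl(\sum_{\psi\in\widehat{C_n}} \lambda_{n,\psi}\Bigr) - 1,
\]
so it suffices to control the contributions from every character $\psi\in\widehat{C_n}$.

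First I would dispose of the nontrivial characters. Lemma \ref{mu n chi and lambda n chi are 0} exactly says that under the hypothesis $\ell\nmid n$, for every nontrivial $\psi\in\widehat{C_n}$ we have $\mu_{n,\psi}=\lambda_{n,\psi}=0$. Thus the sums above collapse to the single term coming from the trivial character $\psi=1$.

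Next I would handle the trivial character using Lemma \ref{trivial chi case lemma}. The assumption $\sum_{s\in C_n'}\bar{\beta}(s)^2\neq 0$ is precisely condition (3) of that lemma, which is equivalent to $\mu_1=0$ and $\lambda_1=2$. Plugging into the two displayed formulas then gives $\mu_n = 0$ (since every $\mu_{n,\psi}$ vanishes, so the factor $1/e$ is immaterial) and $\lambda_n = \lambda_{n,1} - 1 = 2-1 = 1$, as claimed.

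There is no real obstacle here; the work has already been done in Theorem \ref{sum of Iwasawa invariants theorem}, Lemma \ref{trivial chi case lemma}, and Lemma \ref{mu n chi and lambda n chi are 0}. The only minor point worth checking is that the hypotheses of Definition \ref{def of beta} (needed to invoke Theorem \ref{determinant calculation} and its downstream consequences) are genuinely satisfied by $\beta$, which is given by assumption in the statement. In particular, one should verify at the outset that the theorem is not vacuously constraining $\beta$: the congruence condition \eqref{congruence equation} can be arranged as in Proposition \ref{reformation propn} since $C_n$ contains elements of order coprime to $\ell$ when $\ell\nmid n$.
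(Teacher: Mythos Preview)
Your proposal is correct and follows essentially the same approach as the paper's own proof: invoke Lemma \ref{mu n chi and lambda n chi are 0} for the nontrivial characters, Lemma \ref{trivial chi case lemma} for the trivial character, and combine via Theorem \ref{sum of Iwasawa invariants theorem}. Your additional remark about verifying the hypotheses of Definition \ref{def of beta} is not in the paper's proof but is a reasonable sanity check.
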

\begin{proof}
    It follows from Lemma \ref{mu n chi and lambda n chi are 0} that $\mu_{n, \psi}=\lambda_{n, \psi}=0$ for all nontrivial $\psi\in \widehat{G}$. On the other hand, $\mu_{n, 1}=0$ and $\lambda_{n, 1}=2$ by Lemma \ref{trivial chi case lemma}. Therefore, it follows from Theorem \ref{sum of Iwasawa invariants theorem} that $\mu_n=0$ and $\lambda_n=1$. 
\end{proof}
\section{Illustrative examples}\label{s 5}
In this section, we illustrate our results through two examples.

\subsection{Example 1}
\par Let $G=\Z/4\Z:=\{0,1,2,3\}$, $S=\{1, 2, 3\}$, $X:=\rm{Cay}(G, S)$ and $\ell:=3$. Note that the condition $S=S^{-1}$ is satisfied. Moreover, the Assumption \ref{no vertices with val 1} is satisfied and $\chi(X)=-2$. Now, define the function $\beta:S \rightarrow \Z_3$ as follows
    \[
    \beta(s)=\begin{cases}
          1, & \mathrm{if}\  s=1;\\
          0, & \mathrm{if}\  s=2;\\
          -1, & \mathrm{if}\  s=3;
       \end{cases}
    \]
and $\alpha: E_X^+\rightarrow \Z_3$ the associated function. We check that the conditions (1)--(4) of Definition \ref{def of beta} are all satisfied.
\begin{enumerate}
    \item The image of $\beta$ is $\{-1, 0, 1\}$ and clearly generates $\Z_3$.
    \item The condition $\beta(-s)=-\beta(s)$ is easy to verify. 
    \item By definition, the image of $\beta$ lies in $\Z$.
    \item We apply Proposition \ref{reformation propn} to verify the condition (4). Indeed setting $h:=1$, this element has order $m=4$. We note that $(m, \ell)=1$ and $\beta(h)=1$. Also, $h, h^2$ and $h^3$ are contained in $S$.
\end{enumerate}
The matrix $M(1+T)$ (see \eqref{def of M}) is given by 
    \[
      \begin{pmatrix}
        3 & -(1+T)^{-1} & -1 & -(1+T)\\
        -(1+T) & 3 &  -(1+T)^{-1}& -1\\
        -1 & -(1+T) & 3 & -(1+T)^{-1}\\
         -(1+T)^{-1}& -1  & -(1+T)& 3 
    \end{pmatrix}
    \]
Let $\psi\in \widehat{G}$ be the character which is defined by $\psi(n):=\op{exp}\left(\frac{2\pi \mathbf{i} 
 n}{4}\right)=\mathbf{i}^n$, where $\mathbf{i}$ is a squareroot of $-1$. Set $P_j(T):=P_{\psi^j}(T)$ for $j=0, \dots, 3$; note that $m_\beta=1$. Setting $x:=(1+T)$, we find that 
 \[\begin{split}P_j(T)=& -\mathbf{i}^{3j}x^{2}+ (3-\mathbf{i}^{2j})x-\mathbf{i}^j \\
 =& \begin{cases}
   -(x-1)^2 & \text{ if }j=0;\\
    (\mathbf{i}x^2+4x-\mathbf{i}) & \text{ if }j=1;\\
    (x+1)^2 & \text{ if }j=2;\\
    (-\mathbf{i}x^2+4x+\mathbf{i}) & \text{ if }j=3.\\
 \end{cases}\end{split}\]
 On the other hand, a computation of the determinant yields
    \[
    \begin{split} f_{X,\alpha}(T)=& \det (M(x)) \\
    =& x^{-4}\begin{pmatrix}
        3x & -1 & -x & -x^2\\
        -x^2 & 3x &  -1& -x\\
        -x & -x^2 & 3x & -1\\
         -1 & -x  & -x^2 & 3x 
    \end{pmatrix}\\
    =& -x^{-4}(x+1)^2(x-1)^2(x^4+14x^2+1)\\
    =& x^{-4}(x+1)^2(x-1)^2(ix^2+4x-i)(-ix^2+4x+i)\\
    = & x^{-4}\prod_{j=0}^3 P_j(T).
    \end{split}
    \]
This illustrates the Theorem \ref{determinant calculation}.
    
    Set $K_\psi=\Q_3(\mathbf{i})$, and $\cO$ (resp. $\varpi$) be the valuation ring (resp. unformizer) of $K_\psi$. We have that $\varpi=(3)$. By Theorem \ref{sum of Iwasawa invariants theorem}, we have that 
\begin{equation}\label{boring equation sum of iwasawa invariants}\begin{split}
        &\mu_3(X, \alpha)=\left(\sum_{j=0}^3 \mu_{\psi^j}\right);\\
        &\lambda_3(X, \alpha)=\left(\sum_{j=0}^3 \lambda_{\psi^j}\right)-1.
    \end{split}\end{equation}
For $j\in \{1, 2, 3\}$, then Lemma \ref{basic lemma on mu and lambda vanishing} asserts that the following are equivalent
\begin{enumerate}
    \item $\mu_{\psi^j}=0$ and $\lambda_{\psi^j}=0$,
    \item $\mathbf{i}^{j}+\mathbf{i}^{2j}+\mathbf{i}^{3j}\not \equiv 0\pmod{3}$.
\end{enumerate}
Since $\mathbf{i}^{j}+\mathbf{i}^{2j}+\mathbf{i}^{3j}=-1$, it follows that for $j\in \{1,2,3\}$, \[\mu_{\psi^j}=0\text{ and }\lambda_{\psi^j}=0.\]
We have that $P_0(T)=-T^2$, hence, $\mu_1=0$ and $\lambda_1=2$. Thus, from \eqref{boring equation sum of iwasawa invariants}, we have that 
\[\mu_3(X, \alpha)=0\text{ and }\lambda_3(X, \alpha)=1.\]
    
    One can visualize the $\Z_3$-tower as follows:
\begin{equation*}\label{picture}
\begin{tikzpicture}[baseline={([yshift=-0.6ex, xshift=-0.9ex] current bounding box.center)}, scale=0.8]
    \node[draw=none, minimum size=2.5cm, regular polygon, regular polygon sides=4] (a) {};
    \foreach \x in {1,2,3,4}
        \fill (a.corner \x) circle[radius=0.7pt];
    \path (a.corner 1) edge (a.corner 2);
    \path (a.corner 2) edge (a.corner 3);
    \path (a.corner 3) edge (a.corner 4);
    \path (a.corner 4) edge (a.corner 1);
     \path (a.corner 1) edge (a.corner 3);
      \path (a.corner 2) edge (a.corner 4);
\end{tikzpicture}
 \longleftarrow \, \,
\begin{tikzpicture}[baseline={([yshift=-0.6ex] current bounding box.center)}]
\node[draw=none,minimum size=3cm,regular polygon,regular polygon sides=12] (a) {};

\foreach \x in {1,2,...,12}
  \fill (a.corner \x) circle[radius=0.5pt];
  
\foreach \y\z in {1/6,1/7,2/4,2/8,3/5,3/9,4/9,4/10,5/7,5/11,6/8,6/12,7/12,8/10,9/11,10/3, 11/1, 12/2}
  \path (a.corner \y) edge (a.corner \z);
\end{tikzpicture}
 \longleftarrow \, \,
\begin{tikzpicture}[baseline={([yshift=-0.6ex] current bounding box.center)}]
\node[draw=none,minimum size=3cm,regular polygon,regular polygon sides=36] (a) {};

\foreach \x in {1,2,...,36}
  \fill (a.corner \x) circle[radius=0.5pt];
  
\foreach \y\z in {1/18, 1/19,2/10,2/20,3/11,3/21,4/12,4/22,5/13,5/23,6/14,6/24,7/15,7/25,8/16,8/26,9/17,9/27,10/27,10/28,11/19,11/29,12/20,12/30,13/21,13/31,14/22,14/32,15/23,15/33,16/24,16/34,17/25,17/35,18/26,18/36,19/36,20/28,21/29,22/30,23/31,24/32,25/33,26/34,27/35,28/9,29/1,30/2,31/3,32/4,33/5,34/6,35/7,36/8}
  \path (a.corner \y) edge (a.corner \z);
  
\end{tikzpicture}
 \longleftarrow \, \,
\begin{tikzpicture}[baseline={([yshift=-0.6ex] current bounding box.center)}]
\node[draw=none,minimum size=4cm,regular polygon,regular polygon sides=108] (a) {};

\foreach \x in {1,2,...,108}
  \fill (a.corner \x) circle[radius=0.5pt];
  
\foreach \y\z in {1/54,2/28,3/29,4/30,5/31,6/32,7/33,8/34,9/35,10/36,11/37,12/38,13/39,14/40,15/41,16/42,17/43,18/44,19/45,20/46,21/47,22/48,23/49,24/50, 25/51,26/52, 27/53}
  \path (a.corner \y) edge (a.corner \z);

\foreach \y\z in {1/55,2/56,3/57,4/58,5/59,6/60,7/61,8/62,9/63,10/64,11/65,12/66,13/67,14/67,15/69,16/70,17/71,18/72,19/73,20/74,21/75,22/76,23/77,24/78,25/79,26/80,27/81}
  \path (a.corner \y) edge  (a.corner \z);

  \foreach \y\z in {28/81,29/55,30/56,31/57,32/58,33/59,34/60,35/61,36/62,37/63,38/64,39/65,40/66,41/67,42/68,43/69,44/70,45/71,46/72,47/73,48/74,49/75,50/76,51/77,52/78,53/79,54/80}
  \path (a.corner \y) edge (a.corner \z);

 \foreach \y\z in {28/82,29/83,30/84,31/85,32/86,33/87,34/88,35/89,36/90,37/91,38/92,39/93,40/94,41/95,42/96,43/97,44/98,45/99,46/100,47/101,48/102,49/103,50/104,51/105,52/106,53/107,54/108}
  \path (a.corner \y) edge (a.corner \z);
 
  \foreach \y\z in {55/108,56/82,57/83,58/84,59/85,60/86,61/87,62/88,63/89,64/90,65/91,66/92,67/93,68/94,69/95,70/96,71/97,72/98,73/99, 74/100,75/101,76/102,77/103,78/104,79/105,80/106,81/107}
  \path (a.corner \y) edge (a.corner \z);

  \foreach \y\z in {82/27, 83/1, 84/2, 85/3, 86/4, 87/5, 88/6, 89/7,90/8, 91/9, 92/10, 93/11, 94/12, 95/13,96/14,97/15,98/16,99/17,100/18,101/19,102/20,103/21,104/22,105/23,106/24,107/25,108/26}
  \path (a.corner \y) edge (a.corner \z);
  \end{tikzpicture}
  \longleftarrow \, \,
\end{equation*}

\subsection{Example 2}
We consider another example where $\mu(X,\alpha)>0$. Let $G=\Z/6\Z:=\{0,1,2,3,4,5\}$, $S=\{1,2,3,4,5\}$, $X=\rm{Cay}(G, S)$ and $\ell=2$. Clearly, the condition $S=S^{-1}$ and the Assumption \ref{no vertices with val 1} are satisfied. Now, define the function $\beta: S\rightarrow \Z_2$ as follows:
     \[
    \beta(s)=\begin{cases}
          1, & \mathrm{if}\  s=1\\
          1, & \mathrm{if}\  s=2\\
          0, & \mathrm{if}\  s=3\\
          -1, & \mathrm{if}\  s=4\\
          -1, & \mathrm{if}\  s=5;
       \end{cases}
    \]
and $\alpha: E_X^+\rightarrow \Z_2$ the associated function. We check that the conditons (1)--(4) of Definition \ref{def of beta} are all satisfied.
\begin{enumerate}
    \item The image of $\beta$ is $\{-1, 0, 1\}$ and clearly generates $\Z_2$.
    \item The condition $\beta(-s)=-\beta(s)$ is easy to verify. 
    \item By definition, the image of $\beta$ lies in $\Z$.
    \item Consider the tuple $(1,1,1)\in S^3$. Then $\beta(1+1+1)=\beta(3)=0$,  $\beta(1)+\beta(1)+\beta(1)=3 $ and hence $\beta(1+1+1)\not\equiv 3\beta(1) \pmod{2}$.
\end{enumerate}

Now, the matrix $M(1+T)$ is given by
  \[
      \begin{pmatrix}
        5 & -(1+T)^{-1}& -(1+T)^{-1}& -1 & -(1+T) & -(1+T)\\
        -(1+T) & 5 & -(1+T)^{-1} & -(1+T)^{-1} & -1  &-(1+T)\\
        -(1+T) & -(1+T) & 5 & -(1+T)^{-1} & -(1+T)^{-1} & -1 \\
         -1 & -(1+T)& -(1+T) & 5  & -(1+T)^{-1}& -(1+T)^{-1}\\
         -(1+T)^{-1} & -1 & -(1+T) &-(1+T) & 5 & -(1+T)^{-1}\\
         -(1+T)^{-1} & -(1+T)^{-1} & -1 & -(1+T) & -(1+T) & 5
    \end{pmatrix}
    \]
Let $\psi\in \widehat{G}$ be the character which is defined by $\psi(n):=\op{exp}\left(\frac{2\pi \mathbf{i} 
 n}{6}\right)$. Let $\omega:=\op{exp}\left(\frac{2\pi \mathbf{i}}{6}\right)$. Therefore, $\psi(n)=\omega^n$. Note that  $\omega^3=-1$, $\omega^2+\omega=\mathbf{i}\sqrt{3}$ and $\omega^2-\omega=-1$, where $\mathbf{i}$ is the square root of -1.
 
 Set $P_j(T):=P_{\psi^j}(T)$ for $j=0, \dots, 5$; note that $m_\beta=1$. Setting $x:=(1+T)$, we find that 
 \[\begin{split}P_j(T)=& 5x-(\omega^j+\omega^{2j}+\omega^{3j}x+\omega^{4j}x^2+\omega^{5j}x^2) \\
 =& \begin{cases}
   -2(x-1)^2 & \text{ if }j=0;\\
    6x-\mathbf{i}\sqrt{3} (1-x^2)& \text{ if }j=1;\\
    4x+x^2+1 & \text{ if }j=2;\\
    6x & \text{ if }j=3.\\
     4x+x^2+1 & \text{ if }j=4;\\
     6x+\mathbf{i}\sqrt{3} (1-x^2)& \text{ if }j=5;
 \end{cases}\end{split}\]
Now a computation of the determinant gives us
    \[
    \begin{split} f_{X,\alpha}(T)=& \det (M(x)) \\
   &\begin{pmatrix}
5 & x^{-1} & x^{-1} & -1 & x & x\\
x & 5 & x^{-1} & x^{-1} & -1 & x\\
x & x & 5 & x^{-1} & x^{-1} & -1 \\
-1 & x & x & 5 & x^{-1} & x^{-1}\\
x^{-1} & -1 & x & x & 5 & x^{-1}\\
x^{-1} & x^{-1} & -1 & x & x & 5
\end{pmatrix}\\
    =& -36 x^{-5}(x-1)^2(x^2+4x+1)^2(x^4+10x^2+1)\\
= & x^{-6}\prod_{j=0}^5 P_j(T).
    \end{split}
    \]
This again illustrates the Theorem \ref{determinant calculation}.
    
    Set $K_\psi=\Q_2(\omega)$, and $\cO$ (resp. $\pi$) be the valuation ring (resp. unformizer) of $K_\psi$. We have that $\pi=(2)$. Again, by Theorem \ref{sum of Iwasawa invariants theorem}, we have that 
\begin{equation}\label{boring equation sum of iwasawa invariants2}\begin{split}
        &\mu_2(X, \alpha)=\left(\sum_{j=0}^5 \mu_{\psi^j}\right);\\
        &\lambda_2(X, \alpha)=\left(\sum_{j=0}^5 \lambda_{\psi^j}\right)-1.
    \end{split}\end{equation}
Now, we calculate each of these $\mu_{\psi^j}$'s and $\lambda_{\psi^j}$'s.
\begin{enumerate}
    \item For $P_0=-2T^2$, $\mu_{\psi^0}=1$, $\lambda_{\psi^0}=2$.
    \item For $P_1=\mathbf{i}\sqrt{3}(T^2+2T(1-\sqrt{3})-2\mathbf{i}\sqrt{3})$, $\mu_{\psi}=0$, $\lambda_{\psi}=2$.
    \item For $P_2=T^2+6T+6$, $\mu_{\psi^2}=0$, $\lambda_{\psi^2}=2$.
    \item For $P_3=6(T+1)$, $\mu_{\psi^3}=1$, $\lambda_{\psi^3}=0$.
    \item For $P_4=T^2+6T+6$, $\mu_{\psi^4}=0$, $\lambda_{\psi^4}=2$.
    \item For $P_5=-\mathbf{i}\sqrt{3}(T^2+2T(1-\sqrt{3})-2\mathbf{i}\sqrt{3})$, $\mu_{\psi^5}=0$, $\lambda_{\psi^5}=2$.
\end{enumerate}
Now, using equation \eqref{boring equation sum of iwasawa invariants2}, we get
\[\mu_2(X, \alpha)=2\text{ and }\lambda_2(X, \alpha)=9.\]
One can visualize the $\Z_2$-tower as follows:
\begin{equation*}\label{picture2}
\begin{tikzpicture}[baseline={([yshift=-0.6ex, xshift=-0.9ex] current bounding box.center)}, scale=0.5]
    \foreach \i in {1, 2, ..., 6} {
        \node[draw, circle, fill=black, inner sep=0.5pt] (V\i) at ({360/6 * (\i - 1)}:3) {};
    }
    \foreach \i in {1, 2, ..., 6} {
        \foreach \j in {\i,..., 6} {
            \ifnum\i<\j
                \draw (V\i) -- (V\j);
            \fi
        }
    }
    \end{tikzpicture}
    \longleftarrow \, \,
    \begin{tikzpicture}[baseline={([yshift=-0.6ex, xshift=-0.9ex] current bounding box.center)}, scale=0.5]
    \foreach \i in {1, ..., 12} {
        \node[draw, circle, fill=black, inner sep=0.5pt] (V\i) at ({360/12 * (\i - 1)}:3) {};
    }
    
    \foreach \source/\dest in {
        1/4, 2/3, 3/6, 4/5, 5/8, 6/7, 7/10, 8/9, 9/12, 10/11, 11/2, 12/1,
        1/6, 1/8, 2/5, 2/7, 1/10, 1/12, 2/9, 2/11, 3/8, 4/7, 3/10, 4/9,
        3/12, 4/11, 5/10, 6/9, 5/12, 6/11, 7/12, 8/11} 
        {
        \draw (V\source) -- (V\dest);
       }
\end{tikzpicture}
 \longleftarrow \, \,
\begin{tikzpicture}[baseline={([yshift=-0.6ex, xshift=-0.9ex] current bounding box.center)}, scale=0.33]

    \foreach \i in {1, ..., 24} {
        \node[draw, circle, fill=black, inner sep=0.5pt] (V\i) at ({360/24 * (\i - 1)}:5) {};
    }
    
    \foreach \i in {1, ..., 24} {
        \pgfmathsetmacro{\dest}{mod(\i + 1 - 1, 24) + 1} 
        \draw (V\i) -- (V\dest);
        
        \pgfmathsetmacro{\dest}{mod(\i + 2 - 1, 24) + 1} 
        \draw (V\i) -- (V\dest);
        
        \pgfmathsetmacro{\dest}{mod(\i + 3 - 1, 24) + 1} 
        \draw (V\i) -- (V\dest);
        
        \pgfmathsetmacro{\dest}{mod(\i + 4 - 1, 24) + 1} 
        \draw (V\i) -- (V\dest);
        
        \pgfmathsetmacro{\dest}{mod(\i + 5 - 1, 24) + 1} 
        \draw (V\i) -- (V\dest);
    }
\end{tikzpicture}
\longleftarrow \, \,
 \end{equation*}

\bibliographystyle{alpha}
\bibliography{references}

\begin{thebibliography}{HMSV24}

\bibitem[CP18]{divisorsandsandpiles}
Scott Corry and David Perkinson.
\newblock {\em Divisors and sandpiles}.
\newblock American Mathematical Society, Providence, RI, 2018.
\newblock An introduction to chip-firing.

\bibitem[DLRV24]{DLRV}
C\'{e}dric Dion, Antonio Lei, Anwesh Ray, and Daniel Valli\`eres.
\newblock On the distribution of {I}wasawa invariants associated to multigraphs.
\newblock {\em Nagoya Math. J.}, 253:48--90, 2024.

\bibitem[DV23]{DuBose/Vallieres:2022}
Sage DuBose and Daniel Valli\`eres.
\newblock On {$\Bbb Z^d_\ell$}-towers of graphs.
\newblock {\em Algebr. Comb.}, 6(5):1331--1346, 2023.

\bibitem[FW79]{FerreroWash}
Bruce Ferrero and Lawrence~C. Washington.
\newblock The {I}wasawa invariant {$\mu _{p}$} vanishes for abelian number fields.
\newblock {\em Ann. of Math. (2)}, 109(2):377--395, 1979.

\bibitem[Gon21]{Gonet:2021a}
Sophia~R. Gonet.
\newblock {\em Jacobians of finite and infinite voltage covers of graphs}.
\newblock ProQuest LLC, Ann Arbor, MI, 2021.
\newblock Thesis (Ph.D.)--The University of Vermont and State Agricultural College.

\bibitem[Gon22]{Gonet:2022}
Sophia~R. Gonet.
\newblock Iwasawa theory of {J}acobians of graphs.
\newblock {\em Algebr. Comb.}, 5(5):827--848, 2022.

\bibitem[HMSV24]{HammerMattmanSandsVallieres}
Kyle Hammer, Thomas~W. Mattman, Jonathan~W. Sands, and Daniel Valli\`eres.
\newblock The special value {$u=1$} of {A}rtin-{I}hara {$L$}-functions.
\newblock {\em Proc. Amer. Math. Soc.}, 152(2):501--514, 2024.

\bibitem[Iwa59]{IwasawaMain}
Kenkichi Iwasawa.
\newblock On {$\Gamma $}-extensions of algebraic number fields.
\newblock {\em Bull. Amer. Math. Soc.}, 65:183--226, 1959.

\bibitem[KM22]{Kleine/Muller:2022}
S\"{o}ren Kleine and Katharina M\"{u}ller.
\newblock On the growth of the {J}acobian in $\mathbb{Z}_{p}^{l}$-voltage covers of graphs.
\newblock {\em {P}reprint, arxiv:2211.09763}, 2022.

\bibitem[LM24]{LeiMuller}
Antonio Lei and Katharina M\"{u}ller.
\newblock On the zeta functions of supersingular isogeny graphs and modular curves.
\newblock {\em Arch. Math. (Basel)}, 122(3):285--294, 2024.

\bibitem[MV23]{mcgownvallieresII}
Kevin McGown and Daniel Valli\`eres.
\newblock On abelian {$\ell$}-towers of multigraphs {II}.
\newblock {\em Ann. Math. Qu\'{e}.}, 47(2):461--473, 2023.

\bibitem[MV24]{mcgownvallieresIII}
Kevin McGown and Daniel Valli\`eres.
\newblock On abelian {$\ell$}-towers of multigraphs {III}.
\newblock {\em Ann. Math. Qu\'{e}.}, 48(1):1--19, 2024.

\bibitem[Nor98]{Northshield}
Sam Northshield.
\newblock A note on the zeta function of a graph.
\newblock {\em J. Combin. Theory Ser. B}, 74(2):408--410, 1998.

\bibitem[RV22]{Ray/Vallieres:2022}
Anwesh Ray and Daniel Valli{\`e}res.
\newblock An analogue of {K}ida's formula in graph theory.
\newblock {\em {P}reprint, arXiv:2209.04890}, 2022.

\bibitem[Ter11]{Terras:2011}
Audrey Terras.
\newblock {\em Zeta functions of graphs}, volume 128 of {\em Cambridge Studies in Advanced Mathematics}.
\newblock Cambridge University Press, Cambridge, 2011.
\newblock A stroll through the garden.

\bibitem[Val21]{Vallieres:2021}
Daniel Valli\`eres.
\newblock On abelian {$\ell$}-towers of multigraphs.
\newblock {\em Ann. Math. Qu\'{e}.}, 45(2):433--452, 2021.

\end{thebibliography}
\end{document}